\DeclareMathAlphabet{\mathcal}{OMS}{cmsy}{m}{n}
\DeclareSymbolFont{largesymbols}{OMX}{cmex}{m}{n}
\theoremstyle{plain}% default
\newtheorem{theorem}{Theorem}[section]
\newtheorem{lemma}[theorem]{Lemma}
\theoremstyle{definition}
\theoremstyle{remark}
\newtheorem*{remark}{Remark}
\numberwithin{equation}{section}
\newcommand{\CP}{{\mathcal C}{\mathcal P}}
\newcommand{\bfI}{{\boldsymbol I}}
\newcommand{\mcE}{\mathcal{E}}
\newcommand{\mcF}{\mathcal{F}}
\newcommand{\mcP}{\mathcal{P}}
\newcommand{\mcN}{\mathcal{N}}
\newcommand{\mcR}{\mathcal{R}}
\newcommand{\mcC}{\mathcal{C}}
\newcommand{\mcD}{\mathcal{D}}
\newcommand{\mcDPN}{\mathcal{DPV}}
\newcommand{\mcDPM}{\mathcal{DPM}}
\newcommand{\tn}{|\mspace{-1mu}|\mspace{-1mu}|}
\newcommand{\mcK}{\mathcal{K}}
\newcommand{\mcS}{\mathcal{S}}
\newcommand{\Nod}{\mathcal{V}}
\title{
  Continuous Piecewise Linear Finite Elements for the Kirchhoff--Love Plate Equation
}
\author{
Karl Larsson \footnotemark[2]
    \qquad
  Mats G. Larson \footnotemark[3]
\\[4mm]\it Department of Mathematics and Mathematical Statistics, \\\it Ume{\aa} University, 
SE-901 87 Ume{\aa}, Sweden
}
\date{}
\begin{document}

\maketitle
\renewcommand{\thefootnote}{\fnsymbol{footnote}}
\footnotetext[2]{{\tt karl.larsson@math.umu.se}}
\footnotetext[3]{{\tt mats.larson@math.umu.se}}

\begin{abstract}
A family of continuous piecewise linear finite elements for thin plate problems is presented. We use standard linear interpolation of the deflection field to reconstruct a discontinuous piecewise quadratic deflection field. This allows us to use discontinuous Galerkin methods for the Kirchhoff--Love plate equation. Three example reconstructions of quadratic functions from linear interpolation triangles are presented: a reconstruction using Morley basis functions, a fully quadratic reconstruction, and a more general least squares approach to a fully quadratic reconstruction. The Morley reconstruction is shown to be equivalent to the Basic Plate Triangle. Given a condition on the reconstruction operator, a priori error estimates are proved in energy norm and $L^2$ norm. Numerical results indicate that the Morley reconstruction/Basic Plate Triangle does not converge on unstructured meshes while the fully quadratic reconstruction show optimal convergence.
%\subclass{74S05} % Mechanics of deformable solids > Numerical methods > Finite element methods
\end{abstract}

\tableofcontents

\section{Introduction}
The Kirchhoff-Love plate equation is a fourth order partial differential equation modeling the deflection of thin plates. To approximate solutions to this equation using standard finite element methods $C^1$ finite element spaces are required. The difficulty of creating such spaces on unstructured triangulations is a well known problem. A possible $C^1$ element is the conforming Argyris triangle \cite{Argyris1968} which use a fifth order polynomial approximation. Nonconforming options include the Morley triangle \cite{Morley1968} and more recently discontinuous Galerkin (dG) methods \cite{Engel2002,HansboLarson2002}. While it is clear that higher order elements are in many ways superior for modeling the plate equation, an advantage of low order elements lies in modeling complex domains using few degrees of freedom. With the extension to shells and the desired conformity when combining shells and volumes the advantages of low order elements that only feature displacement degrees of freedom become obvious. While this is a possibility when using dG methods, current formulations \cite{Engel2002,HansboLarson2002} require at least piecewise quadratic polynomials to yield accurate results. The focus of this paper is accurate modeling of the plate equation using a continuous piecewise linear deflection field.

Several authors have tried to develop finite element methods for thin plate modeling using a continuous piecewise linear deflection field. Since most terms in the variational formulation then vanish there is a need to discretely approximate higher order quantities to retain sufficient information. Therefore a common trait for this class of elements is that patches of elements are used for these approximations. Nay and Utku \cite{NayUtku1972} used a patch of elements to reconstruct a quadratic deflection field on each element using least squares approximation. Barnes \cite{Barnes1977} introduced a facet triangular plate element where the normal curvature to each edge is approximated from the change in normal gradient to neighboring elements. In a similar approach Hampshire \cite{Hampshire1992} derived a plate element where the stiffness was represented by torsional springs at each edge. Also based on the idea of torsional springs at element edges Phaal and Calladine \cite{PhaalCalladine1992,PhaalCalladine1992-2} presented a family of facet plate and shell elements which use quadratic polynomial reconstruction to calibrate the spring coefficients. By using a mixed interpolation technique in combination with finite volume concepts Oñate and Cervera \cite{OnateCervera1993} and Oñate and Zárate \cite{OnateZarate2000} proposed a procedure for deriving linear thin plate and shell elements.

In this paper we present a framework for constructing continuous piecewise linear finite elements for the Kirchhoff-Love plate equation. The fundamental idea is to use patches of a continuous piecewise linear function to reconstruct a discontinuous piecewise quadratic function which is used in a dG formulation. We apply the framework for reconstructions in a finite element formalism presented in \cite{BarthLarson2002} to a general dG method for the Kirchhoff-Love plate equation \cite{HansboLarson2002}. Three example reconstructions are presented and related to existing elements. Given a condition on the reconstruction operator we prove a priori error estimates in the energy norm and in the $L^2$ norm.

The remainder of this paper is organized as follows; in Section 2 we present the Kirchhoff-Love plate model and the discontinuous Galerkin method using piecewise quadratics continuous at the nodes, in Section 3 we present three reconstructions from continuous piecewise linears into piecewise quadratics, in Section 4 we prove a priori error estimates, and in Section 5 we present convergence studies and numerical examples.

%%%%%%%%%%%%%%%%%%%%%%%%%%%%%%%%%%%%%%%%%%%%%%%%%%%%%%%%%%%%%%%%%%%%%%%%%%%%%%%

\section{The Plate Model and dG Method}

\subsection{The Kirchhoff-Love Plate Model}
The Kirchhoff-Love equilibrium equation governing the deflection of a thin elastic plate occupying a plane domain $\Omega$ takes the form: Given $f$, find the deflection $u$ such that
\begin{equation}\label{eq:equilibrium}
\sigma_{ij,ij} = f \quad \text{in $\Omega$}
\end{equation} 
where we use the summation convention and the comma sign indicates differentiation. The relationship between moments $\sigma_{ij}$ and curvatures $\kappa_{ij}$ is given by
\begin{align}\label{eq:constitutive}
\sigma_{ij} = \lambda \Delta u \delta_{ij} 
                 + \mu \kappa_{ij}(u), \quad i,j=1,2
\end{align} 
where $\delta_{ij}$ is the Kronecker delta, $\Delta$ is the Laplacian, $\lambda$ and $\mu$ are Lamé parameters, and $\kappa_{ij}$ are curvatures defined by $\kappa_{ij}(u) = u_{,ij}$. Using Poisson's ratio $\nu$ and bending stiffness $D$ we can write the Lamé parameters $\lambda =D\nu$ and $\mu = D(1-\nu)$. The bending stiffness of the plate is defined by
\begin{align}
D=\frac{E p^3}{12(1-\nu^2)}
\end{align}
where $E$ is Young's modulus and $p$ is the thickness of the plate.

Let $n=(n_1,n_2)$ be an outwards unit normal to the boundary $\Gamma=\partial\Omega$ and let $t=(t_1,t_2)=(n_2,-n_1)$ be a tangent to $\Gamma$. To define the boundary conditions we need the following quantities
\begin{align}
u_{,n} &= u_{,j}n_j
\\
u_{,t} &= u_{,j}t_j
\\
M_{nn} &= \sigma_{ij} n_i n_j
\\
M_{nt} &= \sigma_{ij} n_i t_j
\\
T &= \sigma_{ij,j} n_i + M_{nt,t} \label{eq:transv}
\end{align}
where $u_{,n}$ and $u_{,t}$ are normal and tangential gradients, $M_{nn}$ and $M_{nt}$ are bending and twisting moments, and $T$ is the transversal force.

We split the boundary into three disjoint parts $\Gamma = \Gamma_C \cup \Gamma_S \cup \Gamma_F$ and let these parts define a clamped boundary, a simply supported boundary, and a free boundary. Let the set of angular corners on $\Gamma_F$ be denoted $\mathcal{X}_F$. The boundary conditions read
\begin{alignat}{2}
&u = u_{,n} = 0 &&\text{on $\Gamma_C$} \label{eq:bc_c}\\
&u = M_{nn} = 0 &&\text{on $\Gamma_S$} \label{eq:bc_s}\\
&M_{nn} = T = 0 &\qquad&\text{on $\Gamma_F$} \label{eq:bc_f} \\
&M_{n^+ t^+} = M_{n^- t^-} &&\text{at $\mathcal{X}_F$} \label{eq:bc_cf}
\end{alignat}
where $\{n^+,t^+\}$ and $\{n^-,t^-\}$ denote the normal and tangent of $\Gamma$ at respective sides of an angular corner.

Let $H^s(\omega)$ denote the Sobolev space of order $s$ on the set $\omega\subset\Omega$, 
with norm $\left\| \cdot \right\|_{s,\omega}$ and semi-norm $\left| \cdot \right|_{m,\omega}$ defined for $m\leq s$.
Introducing the following function space where the essential boundary conditions are imposed
\begin{align}
\mathcal{W} =\{ v \in H^2(\Omega):~\text{$v=v_{,n}=0$ on $\Gamma_C$, $v=0$ on $\Gamma_S$} \}
\end{align}
we recall that the standard variational statement reads: Find $u\in\mathcal{W}$ such that
\begin{align}
(\sigma_{ij}(u),\kappa_{ij}(v)) = (f,v) \quad \text{for all $v\in\mathcal{W}$}
\end{align}
The calculations leading to this variational statement will be performed in Section \ref{sect:varstatment}, albeit on an element level.
%Note that using the constitutive relationship \eqref{eq:constitutive} we can write
%\begin{align}
%(\sigma_{ij}(u),\kappa_{ij}(v)) = \lambda(\Delta u,\Delta v) + \mu(\kappa_{ij}(u),\kappa_{ij}(v))
%\end{align}

\subsubsection{The Mesh and Discontinuous Space} \label{sectionmesh}
Let $\mcK=\{K\}$ be a triangulation of $\Omega$ into geometrically conforming shape regular triangles. We denote the diameter of element $K$ by $h_K$ and the global mesh size parameter by $h=\max_{K\in\mcK}h_K$. 
Further, let the mesh be quasi-uniform such that
\begin{align} \label{eq:quasiu}
c h \leq h_K \leq C h \quad \text{for all $K$}
\end{align}
where $c$ and $C$ are mesh independent constants.
The set of edges in the mesh is denoted by $\mcE=\{E\}$ and the set of nodes in the mesh is denoted by $\Nod=\Nod(\mcK)=\Nod(\mcE)$. We split $\mcE$ into disjoint subsets
\begin{align}
\mcE = \mcE_I \cup \mcE_C\cup \mcE_S\cup \mcE_F
\end{align}
where $\mcE_I$ is the set of edges in the interior of $\Omega$,
$\mcE_C$ is the set of edges on $\Gamma_C$, etc.
Further, with each edge we associate a fixed unit normal $n_E$ and a corresponding unit tangent $t_E$ such that for edges on the boundary $n_E$ is the exterior unit normal. On each node $\partial E$ belonging to edge $E$ we define $\partial n_E = 1$ if $t_E$ points outwards from $E$ and $\partial n_E = -1$ is $t_E$ points inwards to $E$.

For reasons that become evident when we define the reconstruction operators we make a special construction: for every exterior edge $E\in\mcE\backslash\mcE_I$ we add a ghost element outside the domain by placing an additional a degree of freedom, a ghost node, such that the ghost element becomes anti-symmetric to the interior element, see Figure~\ref{fig:boundarypatch}. We denote the set of ghost elements by $\mcK_G$.

Next we define a number of function spaces: Let $\CP_{1}(\mcS)$ denote the space of continuous piecewise linear functions with support on a set of elements $\mcS$
\begin{align}
\CP_{1}(\mcS) =\{ v \in C^0(\Omega):~\text{$v \vert_K \in \mcP_{1}(K)$ for all $K \in \mcS$} \}
\end{align}
and let $\CP_{1}$ denote the space of continuous piecewise linear functions with support on $\mcK \cup \mcK_G$ and zero on the clamped and the simply supported boundary
\begin{align}
\CP_{1} =\{ v \in \CP_{1}(\mcK \cup \mcK_G):~\text{$v=0$ on $x \in \mcE_C \cup \mcE_S$} \}
\end{align}
Furthermore, let $\mcD\mcP_{2}$ denote the space of discontinuous piecewise quadratic polynomials
\begin{align}
\mcD\mcP_{2} =\{ v :~\text{$v \vert_K \in \mcP_{2}(K)$ for all $K \in \mcK$} \}
\end{align}
and finally let $\mcDPN$ denote the space of discontinuous piecewise quadratic polynomials that 
are continuous at the nodes and zero on nodes associated with the clamped and the simply supported boundaries
\begin{align}
\mcDPN = \{ v \in \mcD\mcP_2 :~\text{$v$ continuous in $x \in \Nod$, $v=0$ in $x \in \Nod(\mcE_C \cup \mcE_S)$}  \}
\end{align}

To formulate our method we will use the following notation for the average
\begin{equation}
\langle v \rangle = 
\begin{cases}
(v^+ + v^-)/2 & E \in \mcE_I
\\
v^+ & E \in \mcE \setminus \mcE_I 
\end{cases}
\end{equation}
and for the jump
\begin{equation}
[v] = 
\begin{cases} 
v^+ - v^-  & E  \in \mcE_I
\\
v^+ & E  \in \mcE \setminus \mcE_I
\end{cases}
\end{equation}
of a function $v$ at an edge $E$, where $v^{\pm}=\lim_{\epsilon\rightarrow 0^+}v(x \mp \epsilon n_E)$ with $x\in E$.

\subsubsection{Variational Formulation on an Element} \label{sect:varstatment}
As a motivation for the dG method we will here derive a variational formulation on each element. We multiply \eqref{eq:equilibrium} by a test function $v \in H^4 = H^4(\Omega)$ and integrate over $K$. Applying Green's formula two times gives
\begin{align}  \label{eq:greens0}
\begin{split}
(\sigma_{ij,ij},v)_K  &= -(\sigma_{ij,i},v_{,j})_K 
          + (\sigma_{ij,i},v n_j)_{\partial K}
\\
&= (\sigma_{ij},v_{,ij})_K - (\sigma_{ij} n_i ,v_{,j})_{\partial K}
        + (\sigma_{ij,i},v n_j)_{\partial K}
\\
&=(\sigma_{ij},v_{,ij})_K 
   - (M_{nn},v_{,n})_{\partial K}
 - (M_{nt},v_{,t})_{\partial K}
         + (\sigma_{ij,i}, v n_j)_{\partial K}
\end{split}
\end{align}
where we use that $v_{,j} = v_{,n} n_j + v_{,t} t_j$ in the last equality.

Partial integration along an edge segment $E$ gives
\begin{equation}\label{eq:greens1}
(M_{nt},v_{,t})_{E} = -(M_{nt,t}, v)_E 
           + (M_{nt}, v n_{\partial E} )_{\partial E}
\end{equation}

Combining \eqref{eq:greens0} and \eqref{eq:greens1} we have the following variational formulation on the element level
\begin{align}\label{eq:elem_variational}
( \sigma_{ij} (u), \kappa_{ij} (v) )_K 
- \sum_{E \subset \partial K} \Bigl(
   (M_{nn},v_{,n})_{E}
      - (T ,v)_{E}
         + (M_{nt}, v n_{\partial E})_{\partial E} \Bigr) = (f, v)_K
\end{align}
for all $v\in H^4$.

\subsubsection{Discrete Moments and Corner Forces}
By giving definitions of the bending and twisting moments and the transversal force on element edges for functions in $\mcDPN$ which is consistent for functions in $H^4$ we can extend the elementwise variational statement \eqref{eq:elem_variational} to a variational statement on $H^4 + \mcDPN$.
Following the procedure in \cite{HansboLarson2002} and motivated by the proof of Lemma \ref{lemma2} below we for $v \in H^4 + \mcDPN$ introduce the following definitions of these quantities on each element edge $E\in\mcE$ unless previously defined by boundary conditions:
\begin{alignat}{2}
M_{nn}(v) &= \langle M_{nn}(v) \rangle - \beta h^{-1} P_{0} [ v_{,n} ]  \label{mnn_def} \\ 
T(v) &= \langle T(v) \rangle  \label{defT} \\
M_{nt}(v) &= \langle M_{nt}(v) \rangle \label{defMnt}
\end{alignat}
where $\beta$ is a positive parameter and $P_0$ is the $L^2$ projection onto the space of constants. Using these definitions in \eqref{eq:elem_variational} and summing over all elements $K\in\mcK$ yields a variational statement on $H^4 + \mcDPN$.

%Note that while we give definitions for bending moment \eqref{mnn_def}, twisting moment \eqref{defMnt} and transversal force \eqref{defT} only the bending moment will actually contribute to the resulting method.

%As $\mcDPN$ is a space of piecewise quadratic functions $\langle T(v) \rangle = 0$. Thus in the resulting method terms containing the transversal force will vanish on all interior edges and on the boundary they will also vanish due to the homogenous boundary conditions. However, for consistency these terms remain in the variational statement.

Due to the nodal continuity of $H^4+\mcDPN$ terms containing the twisting moment will vanish on all interior edges. On the boundary pointwise twisting moments will appear where the boundary is not smooth, but given the homogeneous boundary conditions these terms will be zero on $\Gamma_C \cup \Gamma_S$ as $v=0$, and also zero on $\Gamma_F$ due to \eqref{eq:bc_cf}.

The resulting variational statement is nonsymmetric but we may symmetrize the variational statement without affecting consistency as the added terms become zero for the exact solution.

Next we present the resulting variational statement on $H^4 + \mcDPN$.

\subsubsection{Extended Variational Statement}
The extended variational statement reads: Find $u \in H^4 + \mcDPN$ such that
\begin{align} \label{galerkin1}
	a(u,v)=l(v)	\quad \text{for all }v\in H^4 + \mcDPN
\end{align}
where the bilinear form is defined by
\begin{multline} \label{eq:BilinearQuad}
a(v,w) = \sum_{K\in\mathcal{K}} (\sigma_{ij}(v), \kappa_{ij}(w))_K
\\
 - \sum_{E\in\mathcal{E}\setminus(\mathcal{E}_S\cup\mathcal{E}_F)} \Big( \left(\left\langle M_{nn}(v) \right\rangle,\left[w_{,n}\right]\right)_E + \left(\left[v_{,n}\right],\left\langle M_{nn}(w) \right\rangle\right)_E
\\
 - \beta (h^{-1} P_0 [v_{,n}], P_0 [w_{,n}])_E \Big)
\\
 + \sum_{E\in\mathcal{E}\setminus\mathcal{E}_F} \Big( \left(\left\langle T(v) \right\rangle,\left[w\right]\right)_E + \left(\left[v\right],\left\langle T(w)\right\rangle\right)_E \Big)
\end{multline}
where $\beta$ is a real parameter and the linear functional is defined by
\begin{align} \label{eq:linearform}
l(v)=(f,v)
\end{align}

We now move on to formulate the dG method.

\subsection{The dG Method with Piecewise Quadratics Continuous at Nodes}
The dG method for the plate equation with piecewise quadratic functions continuous at the nodes can now be formulated as follows:
Find $U \in \mcDPN$ such that
\begin{align} \label{method1}
	a(U,v)=l(v)	\quad \text{for all }v\in\mcDPN
\end{align}
where the bilinear form is given by \eqref{eq:BilinearQuad} and the linear functional is given by \eqref{eq:linearform}. Note that the last sum in the bilinear form \eqref{eq:BilinearQuad} gives no contribution as $\langle T(v) \rangle=0$ for $v\in\mcDPN$.
The boundary condition $u_{,n}=0$ on $\Gamma_C$ is weakly enforced via the $\beta$ penalty term while the condition $u=0$ on $\Gamma_C \cup \Gamma_S$ is strongly enforced at the nodes.

For a more general dG method for the plate equation without the restriction to nodal continuity and piecewise quadratics in the approximation of the deflection field we refer to \cite{HansboLarson2002}.

\subsection{The dG Method with Embedded Continuous Piecewise Linears}

To formulate our method using a continuous piecewise linear deflection field we use the framework presented in \cite{BarthLarson2002} for using reconstructions in a finite element formalism. We let $\mathcal{R}$ be a reconstruction operator which embeds the space of continuous piecewise 
linear polynomial functions $\mathcal{CP}_1$ into the space $\mathcal{DPV}$ of discontinuous 
piecewise quadratic polynomials continuous at the nodes:
\begin{align}
\mathcal{R}:\mathcal{CP}_1 \hookrightarrow \mathcal{DPV}
\end{align}
Also let the following criterion on the reconstruction operator hold: For $v\in\mathcal{CP}_1$
\begin{align} \label{eq:strongRcont}
v = \mcR v ,\quad \text{for all $x\in\Nod$}
\end{align}
The discontinuous Galerkin method with embedded continuous piecewise linear functions takes the following form: 
Find $U \in\mathcal{CP}_1$ such that
\begin{align}\label{dgrec}
	a(\mathcal{R}U,\mathcal{R}v) = l(\mathcal{R}v), \quad \text{for all $v \in \mathcal{CP}_1$}
\end{align}
where $a(\cdot,\cdot)$ and $l(\cdot)$ are defined in \eqref{eq:BilinearQuad} and \eqref{eq:linearform}. The clamped boundary condition is weakly enforced by the $\beta$ penalty parameter on $\mcR U$. As $U$ coincides with $\mcR U$ at the nodes we can choose to strongly enforce $u=0$ on $\Gamma_C \cup \Gamma_S$ directly on $U$.

%%%%%%%%%%%%%%%%%%%%%%%%%%%%%%%%%%%%%%%%%%%%%%%%%%%%%%%%%%%%%%%%%%%%%%%%%

\section{Examples of Reconstruction Operators}\label{section3}

In this section we consider three reconstruction operators in the presented framework, all of which embed continuous piecewise 
linear functions into $\mcDPN$. To reconstruct a quadratic function on an element $K$ these operators use the vertex information in a patch of elements. In the first example we reconstruct into the space of the quadratic Morley basis functions, which is the subspace of functions in $\mcDPN$ that have continuous 
normal derivative at element edge midpoints. We show that this method is equivalent to the Basic Plate Triangle 
presented in \cite{OnateCervera1993,OnateZarate2000}. The second example is a fully quadratic reconstruction into 
$\mcDPN$ using a four element patch. In the last example reconstruction we handle special cases where the fully quadratic reconstruction breaks down due to the mesh configuration. A least squares approach to fully quadratic reconstruction is used to allow larger patches when fully quadratic reconstruction from a four element patch fails.

\subsection{Patch of Elements} \label{section:patch}
To reconstruct a complete quadratic polynomial six independent degrees of freedom are required. Thus, a patch of continuous piecewise linear elements is needed to represent sufficient information. We denote the patch that is used for reconstructing a quadratic function on element $K$ by $\mcN(K)$ and let it consist of connected elements in a neighborhood of $K$. Let the patch have finite size such that
\begin{align} \label{finitepatch}
\text{diam}(\mcN(K)) \leq C h_K
\end{align}
where $C$ is a mesh independent constant.

In a triangle mesh a patch $\mcN(K)$ typically is the standard four element patch illustrated in Figure~\ref{fig:standardpatch} consisting of $K$ and the three elements neighboring $K$. For elements neighboring the boundary the patch will include a ghost element outside the domain for each element edge belonging to the boundary, see Figure~\ref{fig:boundarypatch}. As defined in Section~\ref{sectionmesh} the locations of the ghost nodes are set such that the ghost elements are anti-symmetric with respect to $K$, thus preserving properties of structured meshes.

\begin{figure}
\centering
 \subfigure[Standard four element patch.]{
\includegraphics[width=4cm]{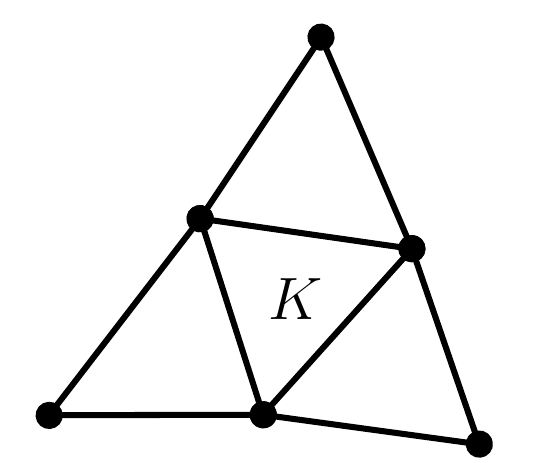}
   %\import{illustrations/}{standardpatch.eps_tex}
	\label{fig:standardpatch}
 }
 \subfigure[Patch on boundary.]{
	\includegraphics[width=4cm]{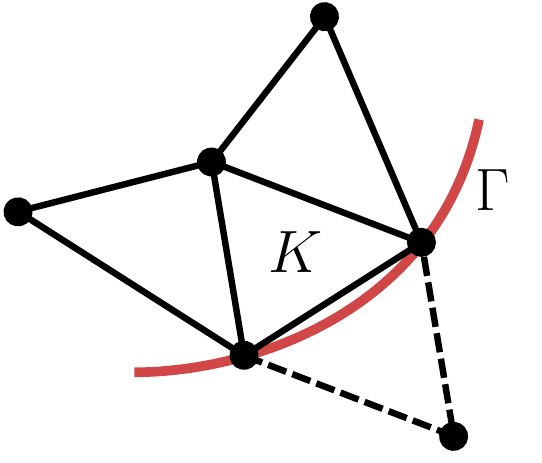}
   %\import{illustrations/}{boundarypatch.eps_tex}
	\label{fig:boundarypatch}
 }
 \caption{Standard patches for (a) interior elements and (b) elements neighboring the boundary.}
\end{figure}

\subsection{Morley Reconstruction}
It is well known that the nonconforming Morley element \cite{Morley1968} shows optimal convergence in the 
approximation of the Kirchhoff-Love plate bending equation. As noted in \cite{HansboLarson2002} this element 
is naturally derived in the setting of dG methods for the plate equation by letting $\beta\rightarrow\infty$ in \eqref{eq:BilinearQuad}. An advantage of 
reconstruction using Morley basis functions is that the jump in the normal derivative at the edge midpoint per 
definition is zero which results in that all interior and exterior edge terms $E\in\mcE\backslash\mcE_C$ disappear in the bilinear form (\ref{eq:BilinearQuad}).

The Morley basis functions are constructed so that the deflection field is continuous at the nodes and the gradient 
in the normal direction is continuous at each edge midpoint $x_E$. Clearly this is a subspace of $\mcDPN$ and as such we define the space of Morley functions
\begin{align}
\mathcal{DPM} = \{ v \in \mcDPN :~\text{$\left.[v_{,n}]\right|_{x_E}=0$ for all $E\in\mathcal{E}\setminus(\mathcal{E}_S\cup\mathcal{E}_F)$}  \}
\end{align}

We define the reconstruction of the normal gradient at 
an element edge to be the average normal gradient of the two neighboring linear triangles. Let $\Nod(K)$ be the set of nodes for element $K$ and let $\Nod_E(K)$ be the set of edge midpoints for element $K$. The reconstruction operator $\mathcal{R}:\mcC\mcP_1 \hookrightarrow \mcDPM$ is defined by $(\mcR u) |_K = (\mcR_K u) |_K$ where $\mcR_K: \mcC\mcP_1(\mcN(K)) \rightarrow \mcP_2(\mcN(K))$ is defined as follows
\begin{align} \label{eq:MorleyRecon}
\mcR_K:
\left\{\begin{aligned}
\mcR_K v &= v \ ,&& x\in\Nod(K) \\
(\mcR_K v)_{,n} &= \left\langle v_{,n}\right\rangle \ , &\qquad& x\in\Nod_E(K)
\end{aligned}\right.
\end{align}
Next, we will show that this choice of reconstruction yields a method equivalent to the Basic Plate Triangle.

\subsubsection{Equivalence with Basic Plate Triangle} \label{sect:BPTequiv}
%{Put in short description of their element here. Make sure this section can be read 
% without refering to the original reference}
The Basic Plate Triangle (BPT) presented in \cite{OnateCervera1993,OnateZarate2000} is a triangular plate element using continuous piecewise linear deflections and is derived by combining finite element and finite volume techniques. We will now describe our interpretation for derivation of the BPT in the presented setting, whereafter we will show equivalence with the method produced by the above choice of Morley reconstruction.

The BPT is a mixed interpolation method where the curvatures and moments are approximated using piecewise constant functions and the deflection field is approximated using functions in $\CP_1$. The fundamental idea in this derivation is that by using partial integration of the curvatures such that
\begin{align} \label{eq:PIcurvatures}
(\kappa_{ij}(u),1)_K = (u_{,ij},1)_K = \left( u_{,i}, n_j\right)_{\partial K} , \qquad i,j=1,2
\end{align} 
and equivalently for the moments
\begin{align} \label{eq:PImoments}
(\sigma_{ij}(u),1)_K = \left(\lambda u_{,n} \delta_{ij} 
                 + \mu u_{,i} n_j , 1 \right)_{\partial K} , \qquad i,j=1,2
\end{align}
these terms can be estimated using a $\mathcal{CP}_1$ deflection field.

Starting with the element contribution to the bilinear form \eqref{eq:BilinearQuad} on each element we have
\begin{align} \label{eq:KContrib}
a_K(u,v) = (\sigma_{ij}(u),\kappa_{ij}(v))_K
\end{align}
By using that the curvatures and moments are assumed constant on the element and applying \eqref{eq:PIcurvatures} and \eqref{eq:PImoments} we get
\begin{align} \label{eq:KContrib2}
a_K(u,v) = \frac{1}{\left|K\right|} (\sigma_{ij}(u),1)_K (\kappa_{ij}(v),1)_K
				 = \frac{1}{\left|K\right|} \left( \lambda u_{,n} \delta_{ij} + \mu u_{,i} n_j ,1 \right)_{\partial K}\left( v_{,i} , n_j \right)_{\partial K}
\end{align}
A deflection field $U\in\mathcal{CP}_1$ is then assumed. As the gradient of a continuous piecewise linear function is undefined on element edges they are defined as the average gradient of neighboring elements
\begin{align}
	U_{,i}|_E \equiv \left\langle U_{,i} \right\rangle , \qquad i=1,2
\end{align}
and likewise for the gradient of the test function $v \in \CP_1$.
Note that this definition of the gradient on edges makes all edge terms from the bilinear form \eqref{eq:BilinearQuad} to be zero in the method, except for edges $E\in\mcE_C$, i.e. the clamped boundary. In the derivation of the BPT the normal gradients naturally appear due to the partial integration and are thus enforced weakly on the clamped boundary. Thus, there is no need to extend patches on clamped edges with ghost elements but if we would the boundary condition would read
\begin{align} \label{BPTclamped}
\left\langle U_{,n} \right\rangle = 0 , \quad \text{on $E \in \mcE_C$}
\end{align}

The BPT method is formulated as follows: Find $U\in\mathcal{CP}_1$ such that
\begin{align} \label{eq:MethodBPT}
\sum_{K\in\mcK}{\hat{a}_K(U,v)} = l(v) , \quad \text{for all $v\in\mathcal{CP}_1$}
\end{align}
where
\begin{align}
\hat{a}_K(U,v) &= \frac{1}{\left|K\right|} \left( \lambda \left\langle U_{,n} \right\rangle \delta_{ij} + \mu \left\langle U_{,i} \right\rangle n_j , 1 \right)_{\partial K}
																		\left( \left\langle v_{,i} \right\rangle , n_j \right)_{\partial K}
\end{align}
The average gradient of $U$ is constant on each edge which means the integrals are exactly evaluated by midpoint quadrature. We get
\begin{align} \label{eq:BiBPT}
\hat{a}_K(U,v) =
	\frac{1}{\left|K\right|} \left( \sum_{E\in\partial K} h_E \bigl(\lambda \left\langle U_{,n} \right\rangle \delta_{ij} + \mu \left\langle U_{,i} \right\rangle n_j \bigr) \big|_{x_E} \right)
	\left( \sum_{E\in\partial K} h_E \bigl(\left\langle v_{,i} \right\rangle n_j \bigr) \big|_{x_E} \right)
\end{align}
where $x_E$ is the midpoint of each edge.

We will now show that the proposed method (\ref{dgrec}) when using the above Morley reconstruction is equivalent to the BPT (\ref{eq:MethodBPT}, \ref{eq:BiBPT}).
%We now turn to showing equivalence with our method when choosing Morley reconstruction.
As previously noted reconstructions into Morley space give no edge terms in the bilinear form \eqref{eq:BilinearQuad}, except for the clamped boundary, so the finite element method reads: Find $U\in\mathcal{CP}_1$ such that
\begin{align}
\sum_{K\in\mcK}{a_K(\mcR_K U,\mcR_K v)} + \sum_{E\in\mcE_C}{a_E(\mcR_K U,\mcR_K v)} = l(\mcR_K v) , \quad \text{for all $v\in\mathcal{CP}_1$}
\end{align}
where $a_K( \cdot , \cdot )$ is defined in \eqref{eq:KContrib} and $a_E( \cdot , \cdot )$ can be identified in the bilinear form \eqref{eq:BilinearQuad}. This boundary term allows us to enforce clamped boundary conditions weakly. As $(\mcR U)_{,n} = \left\langle U_{,n}\right\rangle$ in the Morley reconstruction the enforcement of the clamped boundary condition is equivalent to \eqref{BPTclamped} for large enough $\beta$.

Apart from the difference in how clamped boundary conditions are enforced, there is also a difference in how the load is calculated in the two methods. Disregarding this difference for now, if we can show that $\hat{a}_K(U,v) = a_K(\mcR_K U,\mcR_K v)$ for our choice of $\mcR_K$, the Morley reconstruction yields a method equivalent to the BPT. As the reconstructed functions in the above equation are quadratic, both curvatures $\kappa_{ij}$ and moments $\sigma_{ij}$ are constant. Thus, we may apply the calculations of \eqref{eq:KContrib2} and yield
\begin{align}
a_K(\mcR_K U,\mcR_K v) 
	= \frac{1}{\left|K\right|} \left( \lambda (\mcR_K U)_{,n} \delta_{ij} + \mu (\mcR_K U)_{,i} n_j , 1 \right)_{\partial K}
																		\left( (\mcR_K v)_{,i} , n_j \right)_{\partial K}
\end{align}
As the gradient $(\mcR_K U)_{,i}, \ i=1,2$ is a linear function, the integrals in the expression above are also exactly evaluated through midpoint quadrature. Thus, we have
\begin{multline} \label{eq:BiMorley}
a_K(\mcR_K U,\mcR_K v) =
	\frac{1}{\left|K\right|} \left( \sum_{E\in\partial K} h_E \bigl(\lambda (\mcR_K U)_{,n} \delta_{ij} + \mu (\mcR_K U)_{,i} n_j \bigr) \big|_{x_E} \right)
	\times \\
	\left( \sum_{E\in\partial K} h_E \bigl( (\mcR_K v)_{,i} n_j \bigr) \big|_{x_E} \right)
\end{multline}
where $x_E$ is the midpoint of each edge. Comparing \eqref{eq:BiBPT} with \eqref{eq:BiMorley} we see that the methods are equivalent if $(\mcR_K w)_{,i}|_{x_E}=\left\langle w_{,i} \right\rangle|_{x_E}, \ i=1,2$ for $w\in\mathcal{CP}_1$. Looking at the normal component of the gradient we have
\begin{align}
(\mcR_K w)_{,n}|_{x_E} = \left\langle w_{,n} \right\rangle|_{x_E}
\end{align}
by definition of the reconstruction operator $\mcR_K$. As the reconstructed function $\mcR_K w$ is quadratic and equal to $w$ at the triangle nodes we know that the derivative of $\mcR_K w$ at a midpoint $x_E$ in the tangential direction is equal to the derivative in the tangential direction of the plane defined by the triangle nodes. Using that $w_{,t}$ is continuous over element edges we have
\begin{align}
(\mcR_K w)_{,t}|_{x_E} = w_{,t}|_{x_E} = \left\langle w_{,t} \right\rangle|_{x_E}
\end{align}
Thus $(\mcR_K w)_{,i}|_{x_E}=\left\langle w_{,i} \right\rangle|_{x_E}, \ i=1,2$ which means that the Morley reconstruction yields a method equivalent with BPT, apart from the mentioned differences in enforcement of clamped boundary conditions and in load calculation.

\subsection{Fully Quadratic Reconstruction} \label{section:FullyQuadRecon}

For this reconstruction operator we consider for each triangle $K$ the 
neighborhood $\mcN(K)$ of triangles that share an edge with $K$. Let $\Nod(\mcN(K))$ be the set of nodes in $\mcN(K)$. Then we define $(\mcR u) |_K = (\mcR_K u) |_K$ where $\mcR_K: \mcC\mcP_1(\mcN(K)) \rightarrow \mcP_2(\mcN(K))$ is defined as follows
\begin{align}\label{eq:QuadraticRecon}
\mcR_K:
(\mcR_K v) (x) = v(x) \ , \qquad   x \in \Nod(\mcN(K))
\end{align}
In general, except for some special configurations of the nodes in $\mcN(K)$, this is a well posed 
problem.
%\subsubsection{Alternative}
%For this reconstruction operator we consider for each triangle $K$ the 
%neighborhood $\mcN(K)$ of triangles that share an edge with $K$. Let $\Nod(\mcN(K))$ be the set of vertices in $\mcN(K)$ and introduce a numbering such that $\{x_i\}=\Nod(\mcN(K))$. We define basis functions $\{\phi_i\}\in\mcP_2$ by
%\begin{align}
	%&\phi_j(x_i) =\delta_{ij} &, i,j=1..6&
%\end{align}
%Then we define $(\mcR u) |_K = (\mcR_K u) |_K$ where $\mcR_K: \mcC\mcP_1(\mcN(K)) \rightarrow \mcP_2(\mcN(K))$ 
%as follows
%\begin{align}\label{eq:QuadraticRecon}
%\mcR_K: \left\{\begin{aligned}
%(\mcR_K v) (x) &= \sum_{i=1}^{6}{\beta_i\phi_i(x)} \\
%\beta_i &= v(x_i)& \quad&, i=1..6
%\end{aligned}\right.
%\end{align}
%In general, except for some special configurations of the nodes in $\mcN(K)$, this is a wellposed 
%problem.

\subsubsection{Relation to Morley Reconstruction} \label{sect:StructMesh}
Consider the notation in Figure~\ref{fig:struct_mesh_criteria}. We define a structured mesh to be a mesh where the midpoint between $x_b$ and $x_d$ will be $x_E$, a criterion which we may formulate as
\begin{align} \label{eq:StructCriterion}
x_E = \frac{x_a+x_c}{2} = \frac{x_b+x_d}{2}
\end{align}
A quadratic function with known values at $x_a$ and $x_c$ will at the midpoint $x_E$ have a tangential gradient equal to the slope of a linear function with the same known values at $x_a$ and $x_c$. As this is valid for the quadratic polynomials associated with both $K^+$ and $K^-$ the jump in the tangential gradient for these polynomials is zero at the midpoint. 
The same reasoning is true for the points $x_b$ and $x_d$, and since the midpoint is the same on structured meshes, we conclude that the jump in the gradient is zero at $x_E$. Thus, for a structured mesh all interior edge terms disappear in \eqref{eq:BilinearQuad} as midpoint quadrature exactly evaluates these terms. 
In this case the fully quadratic reconstruction is identical to the Morley reconstruction as the gradient 
at $x_E$ is continuous in both cases. Given a structured mesh, any theoretical results based on the fully quadratic 
reconstruction is thus applicable to the Morley reconstruction/BPT-element.

\begin{figure}
  \centering
	\includegraphics[width=5cm]{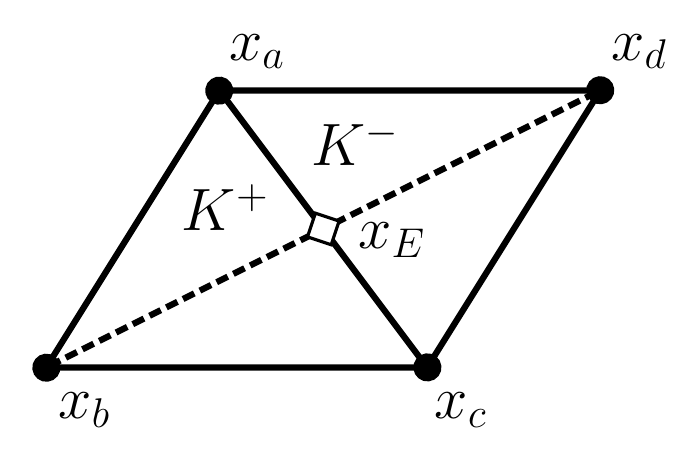}
%\import{illustrations/}{struct_mesh_criteria.eps_tex}
\caption{Illustration of two neighboring elements on a structured mesh.} \label{fig:struct_mesh_criteria}
\end{figure}

\subsubsection{Degenerate Patch Configurations}
While it is unlikely that quality mesh generation will produce patch configurations where the fully quadratic reconstruction fails, we have identified two possible configurations of the standard patch where the fully quadratic reconstruction does fail. We call these degenerate patch configurations.

It is possible that two elements neighboring $K$ share two nodes as illustrated in Figure~\ref{fig:fivenodepatch}, and thus only have five degrees of freedom. Obviously this is insufficient for reconstructing a complete quadratic polynomial.

The other degenerate patch configuration occurs when the set of nodes in the patch includes four nodes positioned on the same straight line as illustrated in Figure~\ref{fig:fournodesaligned}. Along any straight line the quadratic polynomial reduces to a one dimensional quadratic polynomial which is fully described using only three nodal values.

In the next section we will suggest a reconstruction operator that allow extending the patch in the case of a degenerate configuration of the nodes.

\begin{figure}
\centering
 \subfigure[]{
\includegraphics[width=4cm]{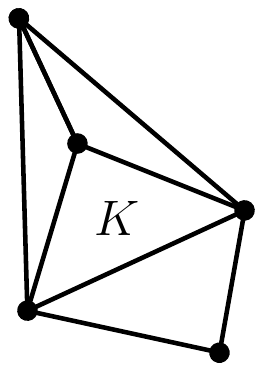}
%   \import{illustrations/}{degenerate1.eps_tex}
\label{fig:fivenodepatch}
 }
 \subfigure[]{
\includegraphics[width=4cm]{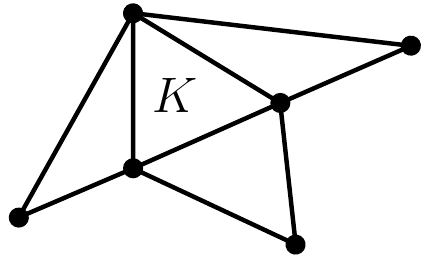}
 % \import{illustrations/}{degenerate2.eps_tex}
\label{fig:fournodesaligned}
 }
 \caption{Degenerate configurations of the standard patch. (a) Standard patch only containing five nodes. (b) Standard patch where four nodes are positioned along a straight line.}
\end{figure}

%\begin{figure}
  %\centering
%\import{illustrations/}{degenerate1.eps_tex}
%\caption{Degenerate configuration of the standard patch only containing five nodes.}
%\label{fig:fivenodepatch}
%\end{figure}
%
%\begin{figure}
  %\centering
%\import{illustrations/}{degenerate2.eps_tex}
%\caption{Degenerate configuration of the standard patch where four nodes are positioned along a straight line.}
%\label{fig:fournodesaligned}
%\end{figure}

\subsection{Least Squares Fully Quadratic Reconstruction}

To deal with the degenerated cases we consider a larger patch of elements in a neighborhood of $K$
and define the reconstruction by exact fitting at the nodes of $K$ and least squares 
fitting at the remaining nodes in the patch.
Let $\Nod(\mcS)$ be the set of nodes in a set of elements $\mcS$.
%Given a linear function $w: K\rightarrow \mathbb{R}$ we describe the space $\mcP_2^w$ as
%\begin{align}
%\mcP_2^w=\{ v: v\in\mcP_2, \text{ and } v(x)=w(x) \text{ for all } x\in\Nod(K) \}
%\end{align}
%which is a subspace to $\mcP_2$.
Again we define $(\mcR u) |_K = (\mcR_K u) |_K$ where $\mcR_K: \mcC\mcP_1(\mcN(K)) \rightarrow \mcP_2(\mcN(K))$ 
is defined as follows
%\begin{align}\label{eq:QuadraticRecon}
%&\mathcal{R}_K u = u \quad \text{at each node in element $K$}
%\\
%&\text{least squares sum minimized over the rest of the nodes}
%\end{align}
\begin{align}\label{eq:LSRecon}
\mcR_K: \left\{\begin{aligned}
&(\mcR_K v)(x) = v(x) \ , &\quad & x\in\Nod(K) \\
&\min_{\mcR_K} \sum_{x \in \Nod_{N} }{\left( (\mcR_K v) (x) - v(x) \right)^2} \ , &\quad & \Nod_{N} = \Nod(\mcN(K))\backslash\Nod(K)
\end{aligned}\right.
\end{align}

The patch of elements $\mcN(K)$ is in general the four element standard patch and the above reconstruction is then identical to the fully quadratic reconstruction. However, if a degenerate patch is detected we extend $\mcN(K)$ one element at a time using elements neighboring $\mcN(K)$ until the patch is no longer degenerate.
%\subsubsection{Alternative}
%For this reconstruction operator we consider for each triangle $K$ the 
%neighborhood $\mcN(K)$ of triangles that share an edge with $K$. Let $\Nod(\mcN(K))$ be the set of vertices in $\mcN(K)$. We introduce a numbered set $\{\hat{x}_i\}$ containing $\Nod(K)$ and the edge midpoints of $K$. We define the standard quadratic basis functions on the triangle $\{\phi_i\}\in\mcP_2$ by
%\begin{align}
	%&\phi_j(\hat{x}_i) =\delta_{ij} &, i,j=1..6&
%\end{align}
%We introduce the numbered set $\{x_i\}=\Nod(\mcN(K))$ where $\{x_i\}_1^3=\Nod(K)$. Then we define $(\mcR u) |_K = (\mcR_K u) |_K$ where $\mcR_K: \mcC\mcP_1(\mcN(K)) \rightarrow \mcP_2(\mcN(K))$ 
%as follows
%\begin{align}\label{eq:LSReconAlt}
%\mcR_K: \left\{\begin{aligned}
%(\mcR_K v) (x) = \sum_{i=1}^{6}{\beta_i\phi_i(x)} \\
%\beta_i = v(x_i) \quad, i=1,2,3 \\
%\min_{\{ \beta_4,\beta_5,\beta_6 \}} \sum_{i=4}^{6}{\left( (\mcR_K v) (x_i) - v(x_i) \right)^2}
%\end{aligned}\right.
%\end{align}

%%%%%%%%%%%%%%%%%%%%%%%%%%%%%%%%%%%%%%%%%%%%%%%%%%%%%%%%%%%%%%%%%%%%%%%%%

\section{A Priori Error Estimates}

 We equip $H^4 + \mcDPN$ with the following energy norm
\begin{multline} \label{eq:energynorm}
  \tn v \tn^2 = \sum_{K \in \mcK} 
 ( \sigma_{ij}(v), \kappa_{ij}(v) )_K
             + h \| \langle M_{nn}(v) \rangle \|_{\partial K  \setminus (\mcE_F \cup \mcE_S)}^2
												 + h^3 \| \langle T(v) \rangle \|_{\partial K  \setminus \mcE_F}^2 \\
                    +  h^{-1} \| P_0 [v_{,n}]\|_{\partial K \setminus (\mcE_F \cup \mcE_S)}^2
\end{multline}  
We note that $\tn \cdot \tn $ is indeed a norm on $H^4 + \mcDPN$ since if
$\sum_{K \in \mcK}( \sigma_{ij}(v), \kappa_{ij}(v) )_K = 0$ then $v$ must
be a piecewise linear function which due to nodal continuity also is continuous.
If also $\sum_{K \in \mcK} \| P_0 [v_{,n}]\|_{\partial K \setminus (\mcE_F \cup \mcE_S)}^2=0$ then $v$ is globally linear.
Finally, for a well posed problem we either need $\Gamma_C \neq \emptyset$ or that there exists no single straight line $\Gamma_\text{line}$ such that
$\Gamma_S \subset \Gamma_\text{line}$. In either case we get $v=0$.

Before turning to our main a priori error estimate we formulate a few lemmas 
that will be needed in the proof.

\begin{lemma} \label{lemma0} The following inequality holds
\begin{equation}\label{lemma0a}
\tn v \tn^2 \leq C \sum_{K \in \mcK} \tn v \tn_K^2, \quad \text{for all $v \in H^4 + \mcDPN$}
\end{equation}
where $\tn \cdot  \tn_K^2$ is defined by
\begin{equation}\label{lemma0b}
\tn v \tn_K^2 =  h^{-2} | v |_{1,K}^2 + | v |_{2,K}^2 + h^2 | v |_{3,K}^2 + h^4 | v |_{4,K}^2
\end{equation}
\end{lemma}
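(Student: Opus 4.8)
The plan is to bound each of the four contributions to $\tn v \tn^2$ in \eqref{eq:energynorm} separately and then sum over the elements, the single workhorse being the scaled trace inequality on a shape-regular element $K$, namely $\|w\|_{\partial K}^2 \leq C\bigl(h_K^{-1}\|w\|_K^2 + h_K\|\nabla w\|_K^2\bigr)$, which by quasi-uniformity \eqref{eq:quasiu} we may rewrite with $h$ in place of $h_K$ up to a change of constant. Applied componentwise to the appropriate order of derivatives of $v$, this single estimate produces every term on the right-hand side of \eqref{lemma0b}.

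First I would dispose of the bulk term. Since $\sigma_{ij}(v) = \lambda\,\Delta v\,\delta_{ij} + \mu\,\kappa_{ij}(v)$ and $\kappa_{ij}(v) = v_{,ij}$ are all linear combinations of second-order derivatives of $v$, Cauchy--Schwarz together with the boundedness of the Lam\'e coefficients gives $(\sigma_{ij}(v),\kappa_{ij}(v))_K \leq C\,|v|_{2,K}^2$, which is already one of the terms in $\tn v \tn_K^2$. For the three edge contributions I would apply the trace inequality to the derivative order appearing in each moment or force. The bending moment $M_{nn}(v)$ is built from second derivatives, so $h\|M_{nn}(v)\|_{\partial K}^2 \leq Ch\bigl(h^{-1}|v|_{2,K}^2 + h\,|v|_{3,K}^2\bigr) = C\bigl(|v|_{2,K}^2 + h^2|v|_{3,K}^2\bigr)$. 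The transversal force $T(v) = \sigma_{ij,j}n_i + M_{nt,t}$ is built from third derivatives, so $h^3\|T(v)\|_{\partial K}^2 \leq C\bigl(h^2|v|_{3,K}^2 + h^4|v|_{4,K}^2\bigr)$. Finally $v_{,n}$ is a first derivative, and since $P_0$ is an $L^2$-contraction we may drop it to obtain $h^{-1}\|v_{,n}\|_{\partial K}^2 \leq C\bigl(h^{-2}|v|_{1,K}^2 + |v|_{2,K}^2\bigr)$. Each resulting bound is a sum of terms appearing in \eqref{lemma0b}.

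The remaining care is with the averages and jumps. On an interior edge shared by $K^+$ and $K^-$ I estimate $|\langle M_{nn}(v)\rangle|^2 \leq C\bigl(|M_{nn}^+|^2 + |M_{nn}^-|^2\bigr)$, and likewise $|\langle T(v)\rangle|^2$ and $|[v_{,n}]|^2$, so that each edge contribution is controlled by one-sided traces from both adjacent elements; on a boundary edge only the single interior trace survives. Summing over all $K\in\mcK$, the trace bounds of any fixed element are reused by its finitely many neighbours, and shape-regularity together with quasi-uniformity bounds this overlap by a mesh-independent constant, so the total remains $\leq C\sum_{K}\tn v \tn_K^2$.

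The main obstacle I anticipate is precisely this bookkeeping of the bounded-overlap summation, combined with keeping all the powers of $h$ consistent across the three scaled trace estimates; the analytic content is entirely standard, so I expect no essential difficulty beyond organizing the constants. I note in passing that for $v\in\mcDPN$ the third- and fourth-order seminorms vanish elementwise, so the estimate is sharp on the discrete space and the full strength of the trace inequality is only needed for the $H^4$ component of $v$.
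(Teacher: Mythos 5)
Your proof is correct and takes essentially the same route as the paper's: the scaled trace inequality $\|w\|^2_{\partial K} \leq C\left( h^{-1}\|w\|_K^2 + h\,|w|_{1,K}^2\right)$ applied at the appropriate derivative order in each term of the energy norm, together with the triangle inequality on the averages and jumps and a bounded-overlap summation over elements. The paper compresses this into two lines (recall the trace inequality, apply the triangle inequality to the interior face contributions); your write-up simply fills in that bookkeeping.
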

\begin{proof}
First recall the well known trace inequality
\begin{align} \label{traceineq}
|v|^2_{\partial K} \leq C \left( h^{-1}\|v\|_K^2 + h | v |_{1,K}^2 \right)
\end{align}
which is proven by affinely mapping $K$ to a reference element $\widehat{K}$, using the trace inequalty $\| v \|_{\partial \hat{K}}^2 \leq C \| v \|_{\hat{K}}^2 \| v \|_{1,\hat{K}}^2$ (see \cite{BS}), and finally mapping back to $K$.

Using the triangle inequality on the interior face contributions of \eqref{eq:energynorm} and then using 
the trace inequality Lemma~\ref{lemma0} is readily established.
\qed
\end{proof}

In conformance with \eqref{lemma0b} we also define the energy norm for a set of elements $\mcS$
\begin{align} \label{lemma0extra}
\tn v \tn_{\mcS}^2 = \sum_{K\in\mcS} \tn v \tn_K^2
\end{align}

Furthermore, we will also need to approximate functions using quadratic polynomials on each patch. 
Before we introduce and prove the appropriate estimate for this interpolation error, recall the Bramble-Hilbert lemma given in \cite{BS}.
\begin{lemma} \label{Bramble-Hilbert}
\emph{(Bramble-Hilbert)}
Let $B$ be a ball in $\omega$ such that $\omega$ is star-shaped with respect to $B$ and such that its radius $\rho > (1/2)\rho_{\text{max}}$.
Let $Q^m u$ be the Taylor polynomial of degree $m$ of $u$ averaged over $B$ where $u \in H^m(\omega)$. Then
\begin{align}
|u-Q^m u|_{k,\omega} \leq C_{m,\gamma_\omega} d^{m-k} |u|_{m,\omega} \quad k=0,1,...,m,
\end{align}
where $d = \text{diam}(\omega)$ and $\gamma_\omega$ is the chunkiness parameter of $\omega$.
\end{lemma}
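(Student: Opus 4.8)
The plan is to follow the constructive averaged--Taylor--polynomial argument of \cite{BS}. First I would make the object $Q^m u$ precise: fixing a cut-off $\phi \in C^\infty_0(B)$ with $\int_B \phi\, dy = 1$, I set
\[
Q^m u(x) = \int_B \Big( \sum_{|\alpha| \le m-1} \frac{1}{\alpha!}\, D^\alpha u(y)\,(x-y)^\alpha \Big)\, \phi(y)\, dy,
\]
the Taylor polynomial about $y$ averaged against $\phi$ (so that $Q^m u$ reproduces all polynomials of degree at most $m-1$, which is the reading consistent with the $|u|_{m,\omega}$ bound). Two facts form the backbone of the argument and follow directly from this definition: $Q^m u \in \mcP_{m-1}$, and $Q^m p = p$ for every $p \in \mcP_{m-1}$. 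This polynomial-reproduction property is exactly what forces the remainder $u - Q^m u$ to be expressible through the $m$-th order derivatives of $u$ alone.

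Next, the heart of the proof is an integral representation of the remainder. Because $\omega$ is star-shaped with respect to $B$, one can write, for a multi-index $\beta$ with $|\beta| = k \le m$,
\[
D^\beta(u - Q^m u)(x) = \sum_{|\alpha| = m} \int_\omega k_{\alpha\beta}(x,z)\, D^\alpha u(z)\, dz,
\]
where the kernels $k_{\alpha\beta}$ are assembled from $\phi$ and the Taylor remainder and carry the homogeneity of a Riesz kernel, namely $|k_{\alpha\beta}(x,z)| \le C\, |x-z|^{-(n-(m-k))}$ with $n$ the spatial dimension. Deriving this representation --- essentially the Sobolev integral representation on a star-shaped domain, followed by differentiation under the integral sign, with the support of $\phi$ and the star-shapedness ensuring the kernel estimate is uniform --- is the technical core.

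I would then invoke boundedness of the Riesz potential
\[
(I_\delta f)(x) = \int_\omega |x-z|^{-(n-\delta)} f(z)\, dz
\]
on $L^2(\omega)$ for $0 < \delta \le n$, whose operator norm is bounded by $C(\gamma_\omega)\, d^{\delta}$ by a Schur/generalized Young argument (the $L^1$ norm of the kernel over a ball of radius $d$ scales like $d^\delta$). Applying this with $\delta = m-k$ to the representation gives $\|D^\beta(u - Q^m u)\|_{0,\omega} \le C\, d^{m-k}\, |u|_{m,\omega}$, and summing over $|\beta| = k$ yields the stated seminorm estimate. The dependence of the constant on the chunkiness parameter $\gamma_\omega$ enters through the cut-off $\phi$ (chosen with derivatives controlled by $1/\rho \sim \gamma_\omega/d$) and through the Riesz constant, while the hypothesis $\rho > (1/2)\rho_{\text{max}}$ keeps both uniform.

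The main obstacle I anticipate is establishing the kernel representation together with the uniform bound: tracking the exact homogeneity $|x-z|^{-(n-(m-k))}$ after $k$-fold differentiation and showing its constant depends only on $\gamma_\omega$ rather than on finer geometry of $\omega$. A softer alternative would avoid this: on a fixed reference domain $\hat\omega$ one proves $\inf_{p \in \mcP_{m-1}} \|u - p\|_{m,\hat\omega} \le C\,|u|_{m,\hat\omega}$ by a compactness/quotient-space argument (Peetre's lemma, using the compact embedding $H^m(\hat\omega) \hookrightarrow H^{m-1}(\hat\omega)$), identifies $Q^m$ as a uniformly bounded projection, and recovers the $d^{m-k}$ factor by affine scaling over the class of domains of bounded chunkiness. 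That route is conceptually cleaner but obscures the explicit $\gamma_\omega$- and $d$-dependence that the finite-element application needs, so I would favor the constructive argument above.
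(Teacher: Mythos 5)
The paper itself gives no proof of this lemma---it is recalled verbatim from \cite{BS}---and your proposal reconstructs exactly the proof in that reference: the averaged Taylor polynomial with a cut-off supported in $B$, reproduction of polynomials of degree $\le m-1$, the Sobolev kernel representation of the remainder over the star-shaped domain, and the $L^2$ bound for the Riesz potential, with the constant tracked through the chunkiness parameter; your reading of the degree convention (that $Q^m u$ has degree $m-1$, consistent with the paper's later use of $Q^3 u$ as a quadratic) is also the correct one. The only nitpick is the endpoint $k=m$, where your Riesz-potential step would need $\delta = m-k = 0$ and so does not apply, but that case is immediate since all $m$-th derivatives of $Q^m u$ vanish, giving $|u - Q^m u|_{m,\omega} = |u|_{m,\omega}$.
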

\begin{remark} \label{remark:patchcrit}
The star-shape criterion on $\omega$ means that there should exist a ball $B\in\omega$ such that from any point inside $B$ there is a free line of sight to all points on the boundary of $\omega$. Let $\rho_{\text{max}}$ be the supremum of the radius of all such balls in $\omega$. The chunkiness parameter is then defined by
\begin{align}
\gamma_\omega = \frac{\text{diam}(\omega)}{\rho_{\text{max}}}
\end{align}

We are going to apply the Bramble-Hilbert lemma on each patch, i.e. $\omega = \mcN(K) \cap \mcK$. Further we will need that the chunkiness parameter
for all patches is limited and therefore we introduce the following restriction on the patches: All patches $\mcN(K)\cap\mcK$ fulfill the star-shape criterion and there exists a global constant $\gamma$ such that
\begin{align} \label{eq:patchcrit}
\gamma_{\mcN(K)\cap\mcK} \leq \gamma \quad \text{for all $K\in\mcK$}
\end{align}
Note that the shape regularity of the mesh is not sufficient to guarantee \eqref{eq:patchcrit} for standard four element patches. However, in most cases where the standard patch does not comply to  \eqref{eq:patchcrit} we may add elements to the patch so that it does. Thus, this restriction
will typically not introduce any constraints on the mesh.
\end{remark}

Now we turn to the interpolation error when using quadratic polynomials in the energy norm \eqref{lemma0extra} of a patch $\mcN(K)\cap\mcK$ and present the following lemma.
\begin{lemma}\label{lemma1}
There is a projection operator $P_{2,K}: H^4(\mcN(K) \cap \mcK) \rightarrow \mcP_2 (\mcN(K) \cap \mcK)$
such that
\begin{equation}\label{eq:approx}
\tn u - P_{2,K} u \tn_{\mcN(K) \cap \mcK}  \leq C h \left( \left| u \right|_{3,\mcN(K) \cap \mcK} + h \left| u \right|_{4,\mcN(K) \cap \mcK} \right)
\end{equation}
for all sufficiently smooth $u$.
\end{lemma}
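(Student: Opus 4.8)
The plan is to establish the estimate by choosing $P_{2,K}u$ to be precisely the Bramble--Hilbert projection $Q^2 u$ of Lemma~\ref{Bramble-Hilbert}, applied on the domain $\omega = \mcN(K)\cap\mcK$. The restriction \eqref{eq:patchcrit} on the chunkiness parameter together with the finite-patch condition \eqref{finitepatch} guarantees that Lemma~\ref{Bramble-Hilbert} applies uniformly across all patches, with a constant independent of $K$ and $h$; this is what allows the emergence of a clean mesh-independent $C$. Since $Q^2 u \in \mcP_2(\mcN(K)\cap\mcK)$, it is an admissible element of the target space, and the projection operator $P_{2,K}$ is well defined on $H^4(\mcN(K)\cap\mcK)$ as required.

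\medskip

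\emph{The main step} is to unpack the patch energy norm $\tn\,\cdot\,\tn_{\mcN(K)\cap\mcK}$ from the definition \eqref{lemma0extra} and \eqref{lemma0b}, and bound each of its four constituent seminorm terms separately. Writing $d = \text{diam}(\mcN(K)\cap\mcK)$, I would apply the Bramble--Hilbert estimate with $m=3$ and $m=4$ as appropriate: for the lower-order terms $|u-Q^2u|_{k}$ with $k=1,2$ one uses $m=3$ to obtain bounds of the form $C\,d^{3-k}|u|_{3}$, while for the higher-order terms $k=3,4$ one appeals to $m=4$ (noting that $Q^2u$ is quadratic, so $|u-Q^2u|_{k,\omega} = |u|_{k,\omega}$ for $k\geq 3$, which one then controls through $m=4$ against $|u|_4$). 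Concretely, the four contributions scale as
\begin{align*}
h^{-2}|u-Q^2u|_{1,\omega}^2 &\lesssim h^{-2} d^{4} |u|_{3,\omega}^2, \\
|u-Q^2u|_{2,\omega}^2 &\lesssim d^{2} |u|_{3,\omega}^2, \\
h^2|u-Q^2u|_{3,\omega}^2 &\lesssim h^2 d^{2} |u|_{4,\omega}^2, \\
h^4|u-Q^2u|_{4,\omega}^2 &\lesssim h^4 |u|_{4,\omega}^2.
\end{align*}
Invoking \eqref{finitepatch} and the quasi-uniformity \eqref{eq:quasiu} to replace $d$ by $Ch$ throughout, every term is seen to be bounded by $C h^2\bigl(|u|_{3,\omega}^2 + h^2|u|_{4,\omega}^2\bigr)$, and taking square roots delivers the claimed inequality \eqref{eq:approx}.

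\medskip

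\emph{The main obstacle} I anticipate is bookkeeping the powers of $d$ versus $h$ correctly so that the final estimate collects with the stated weighting $Ch(|u|_{3} + h|u|_{4})$: one must be careful that the $h^{-2}$ prefactor on the $|{\cdot}|_1$ term is compensated by a $d^4 \sim h^4$ factor from Bramble--Hilbert, and that the seminorms of order $3$ and $4$ land on the correct right-hand side terms. A secondary subtlety is the justification that \eqref{eq:patchcrit} suffices to apply Lemma~\ref{Bramble-Hilbert} uniformly; this is exactly the point of Remark~\ref{remark:patchcrit}, so I would simply cite that the star-shape and bounded-chunkiness hypotheses hold for every patch by construction, yielding a single constant $C$ valid for all $K\in\mcK$.
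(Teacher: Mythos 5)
Your overall strategy coincides with the paper's: take $P_{2,K}u$ to be the averaged Taylor polynomial of Lemma~\ref{Bramble-Hilbert} on $\omega = \mcN(K)\cap\mcK$, invoke the chunkiness restriction \eqref{eq:patchcrit} to get a patch-uniform constant, bound the four seminorm contributions of $\tn \cdot \tn_{\mcN(K)\cap\mcK}$ separately, and convert $d$ into $Ch$ via \eqref{finitepatch} and \eqref{eq:quasiu}. Your first, second, and fourth displayed bounds are exactly the paper's argument. (One notational caution: the paper indexes the averaged Taylor polynomial by order, not degree, so its quadratic projection is written $Q^3u$; your $Q^2u$ is presumably the same object. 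That slip is harmless in itself, but it feeds the error below.)

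The genuine flaw is your third display, $h^2|u-Q^2u|_{3,\omega}^2 \lesssim h^2 d^{2} |u|_{4,\omega}^2$. Since your projection is quadratic, $|u-Q^2u|_{3,\omega} = |u|_{3,\omega}$, so this claim amounts to $|u|_{3,\omega} \lesssim d\,|u|_{4,\omega}$, which is false in general: take $u$ any cubic polynomial, then the right-hand side vanishes while the left-hand side does not. Appealing to Bramble--Hilbert with $m=4$ cannot repair this, because that lemma estimates $|u-Q^4u|_{3,\omega}$, where $Q^4u$ is the \emph{cubic} averaged Taylor polynomial --- you cannot pair the projection belonging to one value of $m$ with the estimate belonging to another. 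The correct (and simpler) treatment, which is what the paper does, is to apply the lemma with $m=3$, $k=3$ (or just use the identity $|u-P_{2,K}u|_{3,\omega}=|u|_{3,\omega}$) so that this term contributes $Ch^2|u|_{3,\omega}^2$, landing on the $h|u|_3$ part of \eqref{eq:approx} rather than on the $h^2|u|_4$ part; only the fourth-order seminorm produces the $h^2|u|_4$ contribution. With that single correction your bookkeeping closes and the lemma follows as you state.
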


\begin{proof}
By the definition of the energy norm of a patch \eqref{lemma0extra} and the quasi-uniformity of the mesh 
it suffices to show that there exists a patch independent constant $C$ such that
\begin{align}
\left| u - P_{2,K} u \right|_{k,\mcN(K) \cap \mcK}
\leq
C h^{3-k} \left| u \right|_{3,\mcN(K)\cap\mcK}	\quad \text{for $k=1,2,3$}
\end{align}
to prove the lemma. As $P_{2,K} u$ gives zero contribution to fourth order derivatives
the term $C h^2\left| u \right|_{4,\mcN(K) \cap \mcK}$ in \eqref{eq:approx} may be directly
derived from the definition of the energy norm of a patch \eqref{lemma0extra}.

Next we verify that the requirements of Lemma~\ref{Bramble-Hilbert} are fulfilled. By restrictions
on the patches there exists a ball $B$ in every patch such that $\mcN(K)\cap\mcK$ is star-shaped with
respect to $B$.  We let the projection operator $P_{2,K} u$ be defined by the Taylor polynomial of
degree 3 of $u$ averaged over $B$, i.e. $P_{2,K}u = Q^3 u$ as defined in \cite{BS}. This will be a quadratic polynomial.

The constant $C_{m,\omega}$ in Lemma~\ref{Bramble-Hilbert} only depends on the domain through
the chunkiness parameter $\gamma_{\omega}$. As $\omega=\mcN(K)\cap\mcK$ we have from the restriction on the patches \eqref{eq:patchcrit} that there exists a global 
constant $\gamma$ such that $\gamma_{\omega} \leq \gamma$ for all patches.
Using this in the proof of Lemma~\ref{Bramble-Hilbert} in \cite{BS} we have that
\begin{align} \label{limitBSconst}
C_{m,{\gamma_\omega}} \leq C_{m,\gamma}
\end{align}
where $C_{m,\gamma}$ is a patch independent constant and we refer the reader to \cite{BS} for details.

We complete the proof by applying Lemma~\ref{Bramble-Hilbert} together with \eqref{limitBSconst} which gives
\begin{align}
\left| u - P_{2,K} u \right|_{k,\mcN(K) \cap \mcK}
&=
\left| u - Q^3 u \right|_{k,\mcN(K) \cap \mcK} \\
&\leq
 C_{3,\gamma} d^{3-k} |u|_{3,\mcN(K) \cap \mcK} \\
&\leq
%C h_K^{3-k} |u|_{3,\mcN(K) \cap \mcK} \\
%&\leq
C h^{3-k} |u|_{3,\mcN(K) \cap \mcK}
\end{align}
where $C$ is a constant independent of the patch and we used \eqref{finitepatch} and \eqref{eq:quasiu} in the last inequality.
\end{proof}

By Sobolev's inequality pointwise values are well defined for functions in $H^4(\mcK)$
so the Lagrange interpolation operator may be used.
%Thereby the Lagrange interpolation operator is well defined on functions in $H^4(\mcK)$ as $H^4(\mcK) \hookrightarrow H^2(\mcK) \hookrightarrow C^0(\mcK)$.
We extend the standard Lagrange interpolation operator to also define values on ghost elements
outside the domain such that $\pi:C^0(\mcK) \rightarrow \CP_1(\mcK \cup \mcK_G)$. As the
functions we need to interpolate lack support outside the domain the interpolation values at
ghost nodes must be defined. For a ghost node $x_G$ associated with element $K\in\mcK$
we define the interpolation value by
\begin{align} \label{Lagrange1}
(\pi v) (x_G) &= (P_{2,K} v)(x_G) + \Delta_K v
\end{align}
where $\Delta_K v$ is given by
\begin{align} \label{Lagrange2}
\Delta_K v &= (v - P_{2,K} v)|_{x=x_1} + (v - P_{2,K} v)|_{x=x_2} - (v - P_{2,K} v)|_{x=x_3} 
\end{align}
and the numbering of nodes in $K$ is such that $x_3$ is the mirror-symmetric node to $x_G$. Note that $\Delta_K v = 0$ for $v \in \mcP_2(K)$.

We shall also need the following inverse estimate proved in \cite{HansboLarson2002}.
\begin{lemma}\label{lemma:inv}
For all $v \in \mcDPN$ the following estimate hold
\begin{align}
  \label{eq:invtraceineq_mnn}
\sum_{K \in \mcK} h \| \langle M_{nn}(v) \rangle \|^2_{\partial K \setminus (\mcE_S \cup \mcE_F)}  
       \leq C_1 \sum_{K \in \mcK} ( \sigma_{ij}(v), \kappa_{ij}(v) )_K
%\\ \label{eq:invtraceineq_t}
%\sum_{K \in \mcK} h^3 \| \langle T(v) \rangle \|^2_{\partial K  \setminus \mcE_F} \leq C_2 \sum_{K \in \mcK} ( \sigma_{ij}(v), \kappa_{ij}(v) )_K
\end{align}
where $C$ denote a constant independent of the meshsize 
$h$ and the parameter $\beta$.  
\end{lemma}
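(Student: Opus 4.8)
The plan is to recognize this as a polynomial inverse trace inequality combined with the coercivity of the constitutive relation. First I would exploit that every $v\in\mcDPN$ restricts to a quadratic on each $K$, so that the curvatures $\kappa_{ij}(v)=v_{,ij}$, and hence the moments $\sigma_{ij}(v)=\lambda\Delta v\,\delta_{ij}+\mu\kappa_{ij}(v)$ from \eqref{eq:constitutive}, are \emph{constant} on $K$. Consequently the one-sided bending moment $M_{nn}(v)=\sigma_{ij}(v)n_in_j$ is constant on each edge $E\subset\partial K$, and a direct scaling computation (a constant $c$ satisfies $\|c\|_E^2=|E|c^2$ while $\|c\|_K^2=|K|c^2$, with $|E|\simeq h$ and $|K|\simeq h^2$ by quasi-uniformity \eqref{eq:quasiu}) gives the inverse trace bound $\|M_{nn}(v)\|_{\partial K}^2\le C h^{-1}\|M_{nn}(v)\|_K^2$. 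More generally this is the standard inverse trace inequality for polynomials, obtained by norm equivalence on the finite-dimensional space $\mcP_2(\widehat{K})$ on a reference element and scaling back to $K$.

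Next I would control $\|M_{nn}(v)\|_K$ by the element energy. Since $|M_{nn}(v)|=|\sigma_{ij}(v)n_in_j|\le C|\bfsig(v)|$ pointwise (with $|\bfn|=1$), it suffices to bound $\|\bfsig(v)\|_K^2=(\sigma_{ij}(v),\sigma_{ij}(v))_K$ by $(\sigma_{ij}(v),\kappa_{ij}(v))_K$. The constitutive relation \eqref{eq:constitutive} defines an invertible linear map between the symmetric tensors $\kappa_{ij}$ and $\sigma_{ij}$ whenever $\mu>0$ and $2\lambda+\mu>0$ (which holds for physical Poisson ratio, since $\mu=D(1-\nu)$ and $2\lambda+\mu=D(1+\nu)$), and under this condition the energy density $\sigma_{ij}\kappa_{ij}=\lambda(\Delta v)^2+\mu\,\kappa_{ij}\kappa_{ij}$ is a positive-definite quadratic form in $\sigma_{ij}$. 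Hence there is a constant, depending only on $\lambda$ and $\mu$, such that $|\bfsig(v)|^2\le C\,\sigma_{ij}(v)\kappa_{ij}(v)$ pointwise; integrating over $K$ gives $\|\bfsig(v)\|_K^2\le C(\sigma_{ij}(v),\kappa_{ij}(v))_K$. Combining with the previous step and multiplying by $h$ yields the local estimate $h\|M_{nn}(v)\|_{\partial K}^2\le C(\sigma_{ij}(v),\kappa_{ij}(v))_K$.

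Finally I would pass from the one-sided moments to the averages and sum. On an interior edge shared by $K^+$ and $K^-$ the elementary bound $\langle M_{nn}(v)\rangle^2\le\tfrac12\bigl((M_{nn}^+)^2+(M_{nn}^-)^2\bigr)$ controls the averaged quantity by the two one-sided contributions, each of which is bounded by the energy of the corresponding element through the local estimate; on boundary edges in $\mcE\setminus(\mcE_S\cup\mcE_F)$ the average reduces to the single interior trace. Summing $h\|\langle M_{nn}(v)\rangle\|^2_{\partial K\setminus(\mcE_S\cup\mcE_F)}$ over $K\in\mcK$ then counts each element's energy only a bounded number of times, namely at most three edges per element and two elements per edge by shape regularity, so the finite overlap contributes only a fixed multiplicative factor. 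This gives the asserted bound with $C_1$ depending on the shape regularity, the quasi-uniformity constants, and $\lambda,\mu$, but independent of $h$; since no $\beta$ penalty term enters the averaged moment $\langle M_{nn}(v)\rangle$, independence of $\beta$ is automatic.

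There is no deep obstacle here, as the result is in essence a scaling argument. The one point requiring genuine care is the coercivity step, namely establishing $|\bfsig(v)|^2\le C\,\sigma_{ij}(v)\kappa_{ij}(v)$, which relies on the constitutive tensor being positive definite and thus on a bona fide restriction on the Lam\'e parameters; the remaining subtlety is the bookkeeping for the averages and the finite-overlap counting, which must respect the restriction of the edge set to $\mcE\setminus(\mcE_S\cup\mcE_F)$ and ensure that no element energy is counted more than a mesh-independent number of times.
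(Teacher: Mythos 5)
Your proof is correct. There is, however, no in-paper argument to compare it against: the paper imports this lemma verbatim from \cite{HansboLarson2002} and gives no proof, so your write-up fills a genuine gap rather than paralleling one. Your route is the standard one (and essentially that of the cited reference, specialized to the quadratic case): for $v\in\mcDPN$ the curvatures, hence the moments $\bfsig(v)$, are constant on each element, so the trace estimate reduces to elementary scaling ($|E|\simeq h$, $|K|\simeq h^2$ by quasi-uniformity); the pointwise equivalence $|\bfsig(v)|^2\le C\,\sigma_{ij}(v)\kappa_{ij}(v)$ follows from positive definiteness of the constitutive tensor, for which your eigenvalue conditions $\mu>0$ and $2\lambda+\mu=D(1+\nu)>0$ are exactly right; and finite overlap of the element/edge sums absorbs the averages into a fixed constant. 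One cosmetic point: the intermediate inequality $\|M_{nn}(v)\|^2_{\partial K}\le Ch^{-1}\|M_{nn}(v)\|^2_{K}$ is an abuse of notation, since $M_{nn}$ has no natural meaning in the interior of $K$ (the normal $n$ is defined only edgewise); the clean statement, and the one your argument actually uses, is the pointwise bound $|M_{nn}(v)|\le C|\bfsig(v)|$ on $\partial K$ followed by scaling of the constant tensor $\bfsig(v)$ from $\partial K$ to $K$.
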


Finally, we recall the following lemma from \cite{HansboLarson2002} which we will also give proof to.
\begin{lemma}\label{lemma2} Here we collect three basic
results on consistency, continuity, and coercivity:
\\
1. With $u$ the exact solution
of the plate equation and $\mcR U$ the reconstructed dG solution
defined by (\ref{dgrec}) we have
\begin{equation}\label{eq:consistency}
a(u - \mcR U, \mcR v) = 0 \quad {\text{for all $v \in \mathcal{CP}_1$}}.
\end{equation}
2. There is a constant $C$, which is independent of $h$ but in
general depends on $\beta$, such
that
\begin{equation}\label{eq:continuity}
a(v,w) \leq C \tn v \tn \, \tn w \tn \quad v,w \in H^4 + \mcDPN
\end{equation}
\\
3. For $\beta$ sufficiently large the coercivity
estimate
\begin{equation}\label{eq:coercivity}
c \tn v \tn^2  \leq a(v,v) \quad v \in \mcDPN,
\end{equation}
holds, with  a positive constant $c$ independent of
$h$ and $\beta$.
\end{lemma}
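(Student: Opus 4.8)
The three statements are independent, so I would prove them in turn.

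For consistency \eqref{eq:consistency}, the plan is to invoke Galerkin orthogonality. Since the reconstruction maps into $\mcDPN$, every test function $\mcR v$ lies in $H^4 + \mcDPN$, so the exact solution $u$ may be tested against it in the extended variational statement \eqref{galerkin1}, giving $a(u,\mcR v) = l(\mcR v)$. The discrete solution satisfies $a(\mcR U,\mcR v) = l(\mcR v)$ by \eqref{dgrec}. Subtracting and using bilinearity of $a$ yields $a(u - \mcR U,\mcR v)=0$. The only substantive point to check is that the smooth solution $u$ indeed satisfies \eqref{galerkin1} with the modified edge quantities \eqref{mnn_def}--\eqref{defMnt}; this is exactly what the element-wise derivation of Section \ref{sect:varstatment} establishes, since for $u \in H^4 \subset C^1$ the jumps $[u_{,n}]$ and $[u]$ vanish, the averages collapse to single-valued traces, and the penalty term disappears, so the extended form reduces to the classical variational statement solved by $u$.

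For continuity \eqref{eq:continuity} I would estimate the bilinear form \eqref{eq:BilinearQuad} term by term with the Cauchy--Schwarz inequality, matching the $h$-weights built into the energy norm \eqref{eq:energynorm}. The volume term is bounded by $\tn v\tn\,\tn w\tn$ directly through Cauchy--Schwarz in the positive semidefinite form $(\sigma_{ij}(\cdot),\kappa_{ij}(\cdot))_K$. For each edge term I would split the jump into its $L^2$ projection onto constants and a mean-zero remainder: the term $(\langle M_{nn}(v)\rangle,[w_{,n}])_E$ pairs $\|h^{1/2}\langle M_{nn}(v)\rangle\|_E$ against $\|h^{-1/2}P_0[w_{,n}]\|_E$ for the mean part, both directly present in \eqref{eq:energynorm}, while the mean-zero remainder $[w_{,n}]-P_0[w_{,n}]$ is, after scaling, dominated by the element curvature $|w|_{2,K}$ and hence by $(\sigma_{ij}(w),\kappa_{ij}(w))_K$. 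The transversal-force terms $(\langle T(v)\rangle,[w])_E$ are treated the same way; here the nodal continuity of $\mcDPN$ makes the function jump $[w]$ vanish at the endpoints of $E$, so $\|h^{-3/2}[w]\|_E$ is again controlled by the element curvature, matching the $h^3\|\langle T\rangle\|^2$ weight. Summing over edges and applying the discrete Cauchy--Schwarz inequality gives \eqref{eq:continuity} with a constant that depends on $\beta$ through the penalty term.

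For coercivity \eqref{eq:coercivity} I would restrict to $v \in \mcDPN$ and first simplify $a(v,v)$: the last sum of \eqref{eq:BilinearQuad} vanishes since $\langle T(v)\rangle=0$ on $\mcDPN$, leaving the volume term, the symmetric pair $-2\sum_E(\langle M_{nn}(v)\rangle,[v_{,n}])_E$, and the penalty $\beta\sum_E h^{-1}\|P_0[v_{,n}]\|_E^2$. The key simplification is that $v$ is piecewise quadratic, so $M_{nn}(v)$ is constant on each edge and $(\langle M_{nn}(v)\rangle,[v_{,n}])_E=(\langle M_{nn}(v)\rangle,P_0[v_{,n}])_E$ exactly; this is why penalizing only the projected jump suffices. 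I would then bound the indefinite cross term with Young's inequality and absorb $\sum_E h\|\langle M_{nn}(v)\rangle\|_E^2$ into a small fraction of $\sum_K(\sigma_{ij}(v),\kappa_{ij}(v))_K$ using the inverse estimate of Lemma \ref{lemma:inv}; choosing the Young parameter small and then $\beta$ above the resulting threshold leaves a positive multiple of both $\sum_K(\sigma_{ij}(v),\kappa_{ij}(v))_K$ and $\sum_E h^{-1}\|P_0[v_{,n}]\|_E^2$. Finally, on $\mcDPN$ these two quantities dominate the whole energy norm: the $h^3\|\langle T\rangle\|^2$ contribution is zero and the $h\|\langle M_{nn}\rangle\|^2$ contribution is again controlled by $\sum_K(\sigma_{ij}(v),\kappa_{ij}(v))_K$ via Lemma \ref{lemma:inv}. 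I expect coercivity to be the main obstacle, precisely because of the sign-indefinite moment--jump cross term: its control forces both the use of the inverse estimate and the requirement that $\beta$ be sufficiently large, which is the structural reason the estimate cannot hold for small $\beta$.
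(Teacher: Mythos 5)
Your proposal is correct and follows essentially the same route as the paper: consistency via Galerkin orthogonality from the extended variational statement, continuity via Cauchy--Schwarz after splitting the jumps into their $P_0$ projections (present in the energy norm) and mean-zero remainders controlled by element curvatures through trace/interpolation estimates (using nodal vanishing of $[w]$), and coercivity via the identity $(\langle M_{nn}(v)\rangle,[v_{,n}])_E=(\langle M_{nn}(v)\rangle,P_0[v_{,n}])_E$, Young's inequality, the inverse estimate of Lemma~\ref{lemma:inv}, and a sufficiently large $\beta$. The only cosmetic difference is that the paper packages the continuity argument as a bound $a(v,w)\leq\tn v\tn_\ast\tn w\tn_\ast$ in an auxiliary norm which is then shown to be dominated by $\tn\cdot\tn$, which is exactly your term-by-term splitting in different notation.
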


\begin{proof}
1.\ This fact is a direct consequence 
of the fact that the exact solution $u$ satisfies the 
variational statement (\ref{galerkin1}). 
\\
2.\ Using the Cauchy Schwarz 
inequality on the definition of the bilinear form 
(\ref{eq:BilinearQuad}) the inequality
\begin{align}
a(v,w) \leq \tn v \tn_{\ast} \tn w \tn_{\ast}
\end{align}
immediately follows where $\tn v \tn^2_{\ast}$ is defined by
\begin{align} \label{extranorm}
\tn v \tn^2_{\ast} = \tn v \tn^2 + \sum_{K \in \mcK} 
  h^{-1} \| [v_{,n}]\|_{\partial K \setminus (\mcE_F \cup \mcE_S)}^2
  +  h^{-3} \| [v]\|_{\partial K \setminus \mcE_F}^2
\end{align}
Estimate \eqref{eq:continuity} follows by showing that the
sum is limited by $\tn v \tn^2$ which we prove next.

We begin by noting that the following equalities hold
\begin{align}
&\| w \|_E^2 = \| w - P_0 w \|_E^2 + \| P_0 w \|_E^2  \\
&\| w - P_0 w \|_E^2 = \frac{h_E^2}{12} \| w_{,t} \|_E^2
\end{align}
for $w \in \mcP_1$. As $[v_{,n}]$ is a linear function we may
apply these equalities to the first term of the sum which together
with quasi-uniformity yields 
\begin{align} \label{eq:exterms2}
h^{-1} \| [v_{,n}]\|_{\partial K \setminus (\mcE_F \cup \mcE_S)}^2 
\leq
C \left( h \| [v_{,nt}] \|_{\partial K \setminus (\mcE_F \cup \mcE_S)}^2
+ h^{-1} \| P_0 [v_{,n}]\|_{\partial K \setminus (\mcE_F \cup \mcE_S)}^2 \right)
\end{align}
where the seconds term already exists in the norm.
We can decompose $v$ into $\tilde{v}+\bar{v}$ where $\tilde{v}\in H^4$ and $\bar{v}\in\mcDPN$.
Due to the jump terms in \eqref{eq:exterms2} the continuous parts of $v$ give no contribution
and we may thus replace $v$ with $\bar{v}$.
To the first term we then apply the triangle inequality to remove the jump
term and can thereby handle each triangle sharing edge $E$ separately.
Applying the trace inequality \eqref{traceineq} we get
\begin{align}
h \| \bar{v}^+_{,nt} \|_{E}^2
\leq
C \left( \| \bar{v}_{,nt} \|_{K^+}^2 + h^2 | \bar{v}_{,nt} |_{1,K^+}^2 \right)
\leq
C  \| v_{,nt} \|_{K^+}^2
\leq
C ( \sigma_{ij}(v), \kappa_{ij}(v) )_{K^+}
\end{align}
where the last inequality comes from that the Lame parameter $\mu > 0$.

For the second term in the sum of \eqref{extranorm} we begin
by subtracting the linear interpolant $\pi [v] = 0$.
%\begin{align}
%h^{-3} \| [v]\|_{\partial K \setminus \mcE_F}^2 = h^{-3} \| [v - \pi v]\|_{\partial K \setminus \mcE_F}^2
%\end{align}
Using the triangle inequality, the trace inequality and interpolation theory we have
\begin{align}
h^{-3} \| v^+ - \pi v^+ \|_{E}^2
&\leq
C \left( h^{-4} \| v - \pi v \|_{K^+}^2 + h^{-2} | v - \pi v |_{1,K^+}^2 \right)
\\ &\leq
C | v |_{2,K^+}^2
\\ &\leq
C ( \sigma_{ij}(v), \kappa_{ij}(v) )_{K^+}
\end{align}
and \eqref{eq:continuity} is established.
\\
3.\
 We have
\begin{multline}
a( v , v )
= \sum_{K \in \mcK} ( \sigma_{ij}(v), \kappa_{ij}(v) )_{K} \\ 
 \quad  - \sum_{E \in \mcE \setminus (\mcE_S \cup \mcE_F)} 
      2 (\langle M_{nn}(v) \rangle , [v_{,n}])_E 
    - \beta h^{-1}  \|P_{l_1} [v_{,n}]\|_{E \setminus (\mcE_S \cup \mcE_F)}^2
\end{multline}
Note that
\begin{align}
(\langle M_{nn}(v) \rangle , [v_{,n}])_E 
      = (\langle M_{nn}(v) \rangle , P_{0} [v_{,n}])_E
\end{align}
since $\langle M_{nn}(v) \rangle$ is a constant and $[v_{,n}]$ is 
a linear function on $E$. Using 
this observation, the Cauchy Schwarz inequality followed by the 
standard inequality $2 ab < \epsilon a^2 + \epsilon^{-1} b^2$, for 
any positive $\epsilon$, and finally the inverse inequality 
(\ref{eq:invtraceineq_mnn}) we obtain
\begin{multline}
 - \sum_{E \in \mcE \setminus (\mcE_S \cup \mcE_F) } 
   2 (\langle M_{nn}(v) \rangle , [v_{,n}])_E  
  \geq \\
	\sum_{K \in \mcK}
	-\epsilon C ( \sigma_{ij}(v), \kappa_{ij}(v) )_{K}
       - \epsilon^{-1} 
     h^{-1} \|P_{0} [v_{,n}]\|^2_{\partial K \setminus (\mcE_S \cup \mcE_F)}
\end{multline}
Given $c$, with $0<c<1$, we choose $\epsilon C = (1-c)/3$ 
and take $\beta \geq c + \epsilon^{-1}$ 
we obtain the coercivity estimate 
(\ref{eq:coercivity}). 
\end{proof}

We are now ready to formulate our main a priori error estimate. 
\begin{theorem} \label{MainTheorem} Assume that the reconstruction operator $\mcR$ is linear and 
satisfies the identity 
\begin{equation}\label{thma}
\mcR_K \pi v = v, \quad \forall v \in \mcP_2(\mcN(K)\cap\mcK), \quad \text{for all $K \in \mcK$}
\end{equation}
where $\pi$ is the extended Lagrange interpolation operator.
Also assume that $u\in H^4(\Omega)$ and that the patch restriction \eqref{eq:patchcrit} is fulfilled.
Then the  following a priori error estimate holds
\begin{equation}
\tn u - \mcR U \tn \leq C h \left( | u |_3 + h | u |_4 \right)
\end{equation}
where $C$ is a constant independent of $h$.
\end{theorem}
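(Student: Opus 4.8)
The plan is to run a standard Strang-type argument built on the three properties collected in Lemma~\ref{lemma2} (consistency, continuity, coercivity), together with the patch approximation estimate of Lemma~\ref{lemma1} and the reproduction hypothesis \eqref{thma}. First I would introduce the interpolated reconstruction $\mcR\pi u$, where $\pi$ is the extended Lagrange operator, and split the error by the triangle inequality,
\[
\tn u - \mcR U\tn \le \tn u - \mcR\pi u\tn + \tn\mcR\pi u - \mcR U\tn .
\]
Since $\mcR$ is linear and maps $\CP_1$ into $\mcDPN$, the second summand equals $\tn\mcR(\pi u - U)\tn$ with $\mcR(\pi u - U)\in\mcDPN$, so coercivity \eqref{eq:coercivity} applies. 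Writing $a(\mcR(\pi u-U),\mcR(\pi u-U)) = a(\mcR\pi u - u,\mcR(\pi u-U)) + a(u-\mcR U,\mcR(\pi u-U))$ and noting that the last term vanishes by consistency \eqref{eq:consistency} (as $\pi u - U\in\CP_1$), continuity \eqref{eq:continuity} gives
\[
c\tn\mcR(\pi u-U)\tn^2 \le C\tn u - \mcR\pi u\tn\,\tn\mcR(\pi u-U)\tn ,
\]
hence $\tn\mcR\pi u - \mcR U\tn \le C\tn u - \mcR\pi u\tn$. Thus the whole estimate reduces to bounding the single quantity $\tn u - \mcR\pi u\tn$.

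For this I would exploit the reproduction identity \eqref{thma}. On each $K$ the operator $P_{2,K}$ of Lemma~\ref{lemma1} produces $P_{2,K}u\in\mcP_2(\mcN(K)\cap\mcK)$, so $\mcR_K\pi P_{2,K}u = P_{2,K}u$, and therefore
\[
(u - \mcR_K\pi u)\big|_K = (u - P_{2,K}u)\big|_K - \mcR_K\pi(u - P_{2,K}u)\big|_K .
\]
Using Lemma~\ref{lemma0} to pass to the elementwise norm and the triangle inequality,
\[
\tn u - \mcR\pi u\tn^2 \le C\sum_{K\in\mcK}\Big(\tn u - P_{2,K}u\tn_K^2 + \tn\mcR_K\pi(u - P_{2,K}u)\tn_K^2\Big).
\]
The first group of terms is controlled directly by Lemma~\ref{lemma1}, since $\tn\cdot\tn_K \le \tn\cdot\tn_{\mcN(K)\cap\mcK}$; summing the squares and using the finite overlap of the patches (guaranteed by \eqref{finitepatch} and quasi-uniformity \eqref{eq:quasiu}) yields the desired $Ch(|u|_3 + h|u|_4)$. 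It remains to treat the reconstruction term, where $e := u - P_{2,K}u$ and I must establish the stability bound $\tn\mcR_K\pi e\tn_K \le C\tn e\tn_{\mcN(K)\cap\mcK}$.

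This stability estimate is the heart of the argument and the step I expect to be the main obstacle. The function $\mcR_K\pi e$ is a quadratic on $K$ determined by the nodal values of $\pi e$ on $\mcN(K)$; at interior nodes these are the values $e(x_i)$, and at a ghost node the special prescription \eqref{Lagrange1}--\eqref{Lagrange2} together with $P_{2,K}e = 0$ (a consequence of $P_{2,K}$ reproducing $\mcP_2$) reduces $(\pi e)(x_G)$ to the bounded combination $e|_{x_1}+e|_{x_2}-e|_{x_3}$ of nodal values of $e$. Since $\tn\cdot\tn_K$ involves only first- and second-order seminorms, I may subtract the patch mean of $e$ and work with $e - \bar e$; an affine scaling to a reference patch, the Sobolev embedding $H^2\hookrightarrow C^0$ in two dimensions to control pointwise nodal values, and a Poincar\'e inequality for $e-\bar e$ then give $\tn\mcR_K\pi e\tn_K^2 \le C(h^{-2}|e|_{1,\mcN(K)\cap\mcK}^2 + |e|_{2,\mcN(K)\cap\mcK}^2) \le C\tn e\tn_{\mcN(K)\cap\mcK}^2$. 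The subtle point hidden here is the uniform-in-$h$ boundedness of the reference reconstruction operator: this is exactly what the non-degeneracy of the patches and the patch restriction \eqref{eq:patchcrit} (and, for the least-squares variant, the patch enlargement) are designed to guarantee, and it is where shape regularity enters decisively. Granting this, I substitute the stability bound, apply Lemma~\ref{lemma1} to $\tn e\tn_{\mcN(K)\cap\mcK} = \tn u - P_{2,K}u\tn_{\mcN(K)\cap\mcK}$, and sum over $K$ using finite overlap to conclude $\tn u - \mcR\pi u\tn \le Ch(|u|_3 + h|u|_4)$, which combined with the first paragraph proves the theorem.
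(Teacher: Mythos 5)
Your proposal is correct and follows essentially the same route as the paper's own proof: the same C\'ea-type reduction to bounding $\tn u - \mcR\pi u\tn$, the same elementwise decomposition via $P_{2,K}u$ and the reproduction identity \eqref{thma} (your two terms are exactly the paper's Terms $I$ and $III$, with Term $II$ vanishing), and the same patch-local stability argument followed by a finite-overlap summation. The only difference is cosmetic: the paper proves the Term-$III$ stability in two steps---stability of $\mcR_K$ on $\CP_1(\mcN(K))$ by affine scaling and finite dimensionality (estimate \eqref{TermIIIa}), then stability of the extended interpolant $\pi$ via the ghost-node construction and interpolation estimates (estimate \eqref{TermIII.b})---whereas you bound the composite $\mcR_K\pi$ directly on the reference patch using Sobolev embedding and Poincar\'e after mean subtraction, which amounts to the same scaling and uniform-boundedness ingredients.
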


Before presenting the proof of Theorem~\ref{MainTheorem} we remark on how the
reconstruction operators presented in Section~\ref{section3} relate to the identity \eqref{thma} in the theorem.
\begin{remark} \label{remark:recon}
By construction the fully quadratic reconstruction and the least squares fully quadratic reconstruction 
satisfy the identity \eqref{thma}. As noted in Section~\ref{sect:StructMesh} this implies that on structured
meshes the Morley reconstruction also satisfies the identity.

On unstructured meshes however, the Morley reconstruction does not satisfy the identity. By the definition
of the Morley basis functions the normal gradient at each edge midpoint must be exactly reconstructed if the 
reconstructed quadratic polynomial shall satisfy \eqref{thma}. As we in the proposed Morley reconstruction use
a pair of linear elements to reconstruct the normal gradient on each edge midpoint, we do not have to consider the complete
patch but rather only pairs of elements.
%Consider the pair of elements illustrated in Figure~X with 
%joint edge $E$ such that $t_E$ is in the $x_2$-direction and the lower edge node defines origo. 
To reconstruct the normal gradient of a quadratic polynomial at the edge midpoint in general five degrees of freedom are needed.
As we in the proposed Morley Reconstruction only use four degrees of freedom, an element pair, to reconstruct the normal gradient
we generally cannot exactly reconstruct quadratic polynomials.

While this remark does not prove that the Morley reconstruction does not converge on unstructured meshes it may give some understanding
of the numerical results.
\end{remark}

\begin{proof}
\emph{of Theorem~\ref{MainTheorem}}
We first note, using the triangle inequality, that
\begin{equation}\label{apriori_a}
\tn u - \mathcal{R} U \tn \leq \tn u - \mathcal{R} \pi u \tn  + \tn \mathcal{R} \pi u  - \mathcal{R} U \tn 
\end{equation}
where $\pi$ is the extended Lagrange interpolation operator.
Using coercivity \eqref{eq:coercivity}, consistency \eqref{eq:consistency}, and the continuity properties in Lemma \ref{lemma2} 
we can estimate the second term as follows
\begin{align}
c \tn \mathcal{R} \pi u - \mathcal{R} U \tn^2 &\leq  a( \mathcal{R} \pi u -  \mathcal{R} U,\mathcal{R} \pi u -  \mathcal{R} U ) 
\\
&=  a( \mathcal{R} \pi u - u  + u -  \mathcal{R} U,\mathcal{R} \pi u -  \mathcal{R} U ) 
\\
&=  a( \mathcal{R} \pi u - u ,\mathcal{R} \pi u -  \mathcal{R} U )
\\ \label{apriori_b}
&\leq C \tn \mathcal{R} \pi u - u \tn \; \tn \mathcal{R} \pi u -  \mathcal{R} U \tn
\end{align}
and thus we arrive at 
\begin{equation}\label{apriori_c}
 \tn \mcR \pi u -  \mathcal{R} U \tn \leq C \tn u - \mathcal{R} \pi u \tn 
\end{equation}
Note that the above derivation follows the proof of Céa's lemma but uses the reconstructions of the analytical and finite element solutions, $\mcR \pi u$ and $\mcR U$, instead of the pure analytical and finite element solutions, $u$ and $U$.
Combining (\ref{apriori_a}) and (\ref{apriori_c}) we obtain
\begin{equation}\label{apriori_d}
 \tn u -  \mathcal{R} U \tn^2 \leq C  \tn u - \mathcal{R} \pi u \tn^2 \leq C \sum_{K \in \mcK} \tn  u - \mathcal{R}_K \pi u \tn_{K}^2
\end{equation}
where we used Lemma \ref{lemma0} in the last inequality.
Adding and subtracting $P_{2,K} u$ and $\mathcal{R}_K \pi P_{2,K} u$ 
and then using the triangle inequality we obtain
\begin{align}
	\tn u-\mathcal{R}_K \pi u \tn_K &\leq 	\tn u - P_{2,K} u \tn_K 
\nonumber \\	
	& \qquad + \tn P_{2,K} u - \mathcal{R}_K \pi P_{2,K} u \tn_{K} 
\nonumber \\	
	& \qquad + \tn \mathcal{R}_K \pi (P_{2,K}  u - u) \tn_{K}
\\
    &= I + II + III \label{terms123}
\end{align}
We now continue with estimates of Terms $I$ to $III$.

\paragraph{Term $\bfI$.} Employing Lemma \ref{lemma1}  we have
 \begin{equation} 
I =\tn u - P_{2,K} u \tn_K  \leq \tn u - P_{2,K} u \tn_{\mcN(K)\cap\mcK} \leq  C ( h| u |_{3,\mcN(K)\cap\mcK} + h^2| u |_{4,\mcN(K)\cap\mcK} )
\end{equation}
 
\paragraph{Term $\bfI\bfI$.} Using the assumption (\ref{thma}) on the reconstruction operator 
we conclude that
\begin{align}
II = \tn P_{2,K} u - \mathcal{R}_K \pi P_{2,K} u \tn_K = 0
\end{align}

\paragraph{Term $\bfI\bfI\bfI$.} Using the following two estimates 
\begin{align}
\label{TermIIIa}
\tn \mcR_K v \tn_{K} &\leq C \tn v \tn_{\mcN(K)} \quad &&\text{for all $v \in \CP_1(\mcN(K))$}
\\
\label{TermIII.b}
\tn \pi (v - P_{2,K} v) \tn_{\mcN(K)} &\leq C \tn  (v - P_{2,K} v) \tn_{\mcN(K)\cap\mcK} \quad &&\text{for all $v \in H^4(\mcN(K)\cap\mcK)$}
\end{align}
which we prove below, we may estimate Term $III$ as follows
\begin{align} 
III &=\tn \mcR_K \pi ({u} - P_{2,K} {u})   \tn_{K} 
\\
&\leq C \tn \pi ({u} - P_{2,K} {u})   \tn_{\mcN(K)}  
\\
&\leq C \tn {u} - P_{2,K} {u} \tn_{\mcN(K)\cap\mcK} 
\\
&\leq C ( h | u |_{3,\mcN(K)\cap\mcK} + h^2 | u |_{4,\mcN(K)\cap\mcK} )
\end{align}
where we used Lemma \ref{lemma1} in the last inequality.

\noindent \emph{Proof of Estimate (\ref{TermIIIa}).} Let $F: \widehat{\mcN(K)} \rightarrow {\mcN(K)}$ 
be a bijective continuous piecewise affine mapping from a reference patch $\widehat{\mcN(K)}$ to the patch 
$\mcN(K)$. We note that, due to shape regularity, we only need to consider a finite number of reference 
patches corresponding to the different topological arrangements of the triangles in the patch. 
The mapping $F$ takes the form 
\begin{equation}
F \widehat{x} = A_{\widehat{K}} \widehat{x} + b_{\widehat{K}},\quad x \in \widehat{K}
\end{equation}

As $F$ maps a triangle of fixed size from a reference patch onto $K$ we have that $|\det{A_{\widehat{K}}}|= C h_K^2$, $\|A_{\widehat{K}}\| \leq h_K/(2\rho_{\widehat{K}}) \leq C h_K$ and by shape regularity $\|A_{\widehat{K}}^{-1}\| \leq h_{\widehat{K}}/(2\rho_{K}) \leq C h_K^{-1}$.
Next we define a mapping  
$\mcF: \mcC \mcP_1(\widehat{\mcN(K)}) \rightarrow \mcC \mcP_1(\mcN(K))$ by
\begin{equation}
 v  = \mcF \widehat{v}  = \widehat{v}  \circ F^{-1} 
\end{equation}
Together with \eqref{eq:quasiu} we have the estimates
\begin{align}\label{IIIa}
| v |_{m,K} &\leq C |\det{A_{{\widehat{K}}}}|^{1/2} \| A^{-1}_{{\widehat{K}}} \|^m | \widehat{v} |_{m,\widehat{K}} \leq C h^{1-m} | \widehat{v} |_{m,\widehat{K}}
\\ \label{IIIb}
| \widehat{v} |_{m,\widehat{K}} &\leq C |\det{A_{{\widehat{K}}}}|^{-1/2} \| A_{{\widehat{K}}} \|^m | v |_{m,K} \leq C h^{m-1} | v |_{m,K}
\end{align}

%\begin{align}\label{IIIa}
%| v |_{m,K} &\leq C ( \text{det}\, A_{\widehat{K}} )^{1/2} \| A^{-1}_{\widehat{K}} \|^m | \widehat{v} |_{m,\widehat{K}} \leq C h_K^{1-m} | \widehat{v} |_{m,\widehat{K}} 
%\\ \label{IIIb}
%| \widehat{v} |_{m,\widehat{K}} &\leq C ( \text{det}\, A_{\widehat{K}}  )^{-1/2} \| A_{\widehat{K}} \|^m | v |_{m,K} \leq C h_K^{m-1} | v |_{m,K}
%\end{align}
Using (\ref{IIIa}) and (\ref{IIIb}) we conclude that there are constants $c$ and $C$ such that
\begin{align}\label{IIIc}
\tn v \tn^2_K &\leq c \tn \widehat{v} \tn^2_{\widehat{K}}  \\
\tn \widehat{v} \tn^2_{\widehat{\mcN(K)}} &\leq C \tn v \tn^2_{\mcN(K)} \label{IIIcc}
\end{align}
where
\begin{equation}
\tn \widehat{v} \tn_{\widehat{K}}^2 =  h^{-2} \left( | \widehat{v} |_{1,\widehat{K}}^2 +  | \widehat{v} |_{2,\widehat{K}}^2 +  | \widehat{v} |_{3,\widehat{K}}^2 \right)
 % \tn \widehat{v} \tn^2_{\widehat{K}}  =  \sum_{m=1}^3 |\widehat{v} |^2_{m,\widehat{K}}  
\end{equation}

Returning to the proof of (\ref{TermIIIa}) we first show that the inequality holds on the reference neighborhood
\begin{equation}\label{IIId}
\tn \widehat{\mcR_K v} \tn_{\widehat{K}} \leq C \tn \widehat{v} \tn_{\widehat{\mcN(K)}} \quad \forall v \in \mcC \mcP_1(\widehat{\mcN(K)})
\end{equation} 
We note that $\tn \widehat{v} \tn_{\widehat{\mcN(K)}} = 0$ if and only if $\widehat{v}$ is 
constant on $\widehat{\mcN(K)}$ but then $v = \mcF \widehat{v}$ is also constant on $\mcN(K)$ and thus 
$\mcR_K v = v$ is also constant. Therefore we conclude that $\tn \widehat{\mcR_K v} \tn_{\widehat{K}} = 0$ 
if $\tn \widehat{v} \tn_{\widehat{\mcN(K)}} = 0$ and inequality (\ref{IIId}) thus follows from finite 
dimensionality. Combining (\ref{IIIc}), \eqref{IIIcc} and (\ref{IIId}) we get 
\begin{equation} \label{est3a}
 \tn \mcR_K v \tn_{K}  \leq C \tn \widehat{\mcR_K v} \tn_{\widehat{K}} \leq C \tn \widehat{v} \tn_{\widehat{\mcN(K)}}\leq C \tn v \tn_{\mcN(K)}
\end{equation}
which concludes the proof of estimate (\ref{TermIIIa}).

\noindent \emph{Proof of Estimate (\ref{TermIII.b}).}
Let $w = v - P_{2,K}v$. By contruction of the extended Lagrange interpolant (\ref{Lagrange1}, \ref{Lagrange2}) and mirror symmetry of ghost elements we have
\begin{align}
\tn \pi w \tn^2_{\mcN(K)} \leq C \tn \pi w \tn^2_{\mcN(K)\cap\mcK}
\end{align}
Adding and subtracting $w$, using the triangle inequality 
and interpolation error estimates we get
\begin{align}
\tn \pi w \tn^2_{\mcN(K)\cap\mcK} &= \sum_{ {K} \subset \mcN(K)\cap\mcK } h^{-2} | \pi w |^2_{1,{K}}
\\
& \leq C \sum_{ {K} \subset \mcN(K)\cap\mcK }  h^{-2} | w - \pi w |^2_{1,{K}}  + h^{-2}| w |^2_{1,{K}}
\\
& \leq   C \sum_{ {K} \subset \mcN(K)\cap\mcK }  h^{-2} | w |^2_{1,{K}} + | w |^2_{2,{K}}
\\
&\leq C \tn w \tn_{\mcN(K)\cap\mcK}^2
\end{align}
and thus estimate (\ref{TermIII.b}) follows.

We have thereby completed the estimates of Terms $I$ to $III$ in \eqref{terms123}. Using these in \eqref{apriori_d} we thus have
\begin{align}
\tn u - \mcR U \tn^2 &\leq C \sum_{K\in\mcK} (I+II+III)^2
\\ &\leq
C \sum_{K\in\mcK} \left(h |u|_{3,\mcN(K)\cap\mcK} + h^2 |u|_{4,\mcN(K)\cap\mcK} \right)^2
\\ &\leq
C \sum_{K\in\mcK} h^2 |u|_{3,\mcN(K)\cap\mcK}^2 +  h^4 |u|_{4,\mcN(K)\cap\mcK}^2
\end{align}
By shape regularity we have that the number of overlaps in the sum will be finite and thereby there exists a constant $C$ such that
\begin{align}
\tn u - \mcR U \tn^2
\leq
C \left( h^2 |u|_{3}^2 +  h^4 |u|_{4}^2 \right)
\leq
C \left( h |u|_{3} +  h^2 |u|_{4} \right)^2
\end{align}
%and hence
%\begin{align}
%\tn u - \mcR U \tn \leq  C ( h |u|_{3} + h^2 |u|_{4} )
%\end{align}
which completes the proof.
\end{proof}

% L2 estimate

We now turn to an estimate of the $L^2$ norm of the error.
This is derived using a duality argument (Nitsche's trick).
We assume that for all $\psi \in H^4 + \mcDPN$ there is a $\phi \in H^4$ such that
\begin{equation}\label{eq:dual}
a(v,\phi) = (v, \psi), \quad \text{for all $v \in H^4 + \mcDPN$}
\end{equation}
and that the following stability estimate holds
\begin{equation}\label{eq:dual_stab}
\| \phi \|_4 \leq C \| \psi \|
\end{equation}
On smooth domains and convex bounded polygonal domains where the inner angle at each corner is less than $126.3^\circ$ this assumption is true, see \cite{Blum}.
\begin{theorem}\label{theorem:2} If the stability estimate 
(\ref{eq:dual_stab}) holds, then $U$ satisfies
\begin{align}\label{eq:err_est}
\| u - \mcR U \| \leq C 
    h^2 \left( | u |_{3} + h | u |_{4} \right)
\end{align}
for sufficiently regular $u$. The constant $C$ is 
independent of $h$ but may in general depend on $\beta$.
\end{theorem}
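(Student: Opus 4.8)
The plan is to run the standard Aubin--Nitsche duality argument, reusing the interpolation machinery already built in the proof of Theorem~\ref{MainTheorem}. The first step is to set up the dual problem with the error itself as data: taking $\psi = u - \mcR U$, which lies in $H^4 + \mcDPN$ since $u \in H^4$ and $\mcR U \in \mcDPN$, the assumption \eqref{eq:dual} furnishes $\phi \in H^4$ with $a(v,\phi) = (v, u - \mcR U)$ for all $v \in H^4 + \mcDPN$, while the stability estimate \eqref{eq:dual_stab} yields $\|\phi\|_4 \leq C\|u - \mcR U\|$.

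Next I would test the dual problem with $v = u - \mcR U$, turning the $L^2$ norm of the error into an evaluation of the bilinear form, and then exploit Galerkin orthogonality. Since the Lagrange interpolant $\pi\phi \in \CP_1$ (it inherits the homogeneous essential boundary conditions of $\phi$), consistency \eqref{eq:consistency} applied with $v = \pi\phi$ gives $a(u - \mcR U, \mcR \pi \phi) = 0$. Subtracting this term and invoking the continuity bound \eqref{eq:continuity} produces
\begin{align}
\|u - \mcR U\|^2 = a(u - \mcR U, \phi) = a(u - \mcR U, \phi - \mcR \pi \phi) \leq C \tn u - \mcR U \tn \, \tn \phi - \mcR \pi \phi \tn .
\end{align}
Note that no symmetry of $a$ is needed here, because the dual problem \eqref{eq:dual} is arranged so that the residual $u-\mcR U$ sits in the same (first) slot as in the consistency identity.

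The third step is to bound $\tn \phi - \mcR \pi \phi \tn$. This is exactly the quantity controlled inside the proof of Theorem~\ref{MainTheorem} through the decomposition into Terms $I$, $II$, $III$, now applied with $\phi$ in place of $u$; it delivers
\begin{align}
\tn \phi - \mcR \pi \phi \tn \leq C h \left( |\phi|_3 + h|\phi|_4 \right) \leq C h \|\phi\|_4 \leq C h \|u - \mcR U\| ,
\end{align}
the last step being the dual stability \eqref{eq:dual_stab}. Combining the two displays, dividing through by $\|u - \mcR U\|$, and finally inserting the energy-norm estimate of Theorem~\ref{MainTheorem} gives
\begin{align}
\|u - \mcR U\| \leq C h \tn u - \mcR U \tn \leq C h^2 \left( |u|_3 + h|u|_4 \right) ,
\end{align}
which is the claimed bound.

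I expect the main obstacle to be ensuring that both gained powers of $h$ are legitimate: one power comes from pairing against $\phi - \mcR\pi\phi$ in the duality step, the other from the energy estimate, and each relies on full $H^4$ regularity --- of $\phi$ through the stability hypothesis \eqref{eq:dual_stab}, and of $u$ by assumption. The reuse of the Term $I$--$III$ estimate for $\phi$ is immediate, since that estimate used only the reconstruction identity \eqref{thma} and the patch restriction \eqref{eq:patchcrit}. The one bookkeeping point to verify carefully is that $\pi\phi \in \CP_1$, i.e.\ that the extended Lagrange interpolant of the dual solution respects the clamped and simply supported nodal constraints, so that consistency \eqref{eq:consistency} may be legitimately invoked with $v = \pi\phi$.
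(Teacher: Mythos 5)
Your proposal is correct and follows essentially the same route as the paper's own proof: the duality argument with $\psi = u - \mcR U$, subtraction of $\mcR\pi\phi$ via consistency \eqref{eq:consistency}, the continuity bound \eqref{eq:continuity}, reuse of the estimate $\tn \phi - \mcR\pi\phi\tn \leq Ch(|\phi|_3 + h|\phi|_4)$ from the proof of Theorem~\ref{MainTheorem}, and finally the stability estimate \eqref{eq:dual_stab}. Your explicit attention to the bookkeeping point that $\pi\phi \in \CP_1$ (so that consistency may be invoked) is a welcome addition, but the argument is the same.
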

\begin{proof}
Setting $v = \psi = u - \mcR U$, in the dual problem (\ref{eq:dual}) 
and using consistency \eqref{eq:consistency} to subtract the
reconstruction $\mcR \pi \phi$ of $\pi \phi$ we obtain 
\begin{align}
\| u - \mcR U \|^2 &= a(u - \mcR U,\phi) 
\\
&=a(u - \mcR U ,\phi - \mcR \pi \phi)
\\
& \leq C \tn u - \mcR U \tn \, \tn \phi - \mcR \pi \phi \tn
\end{align}
where we used continuity (\ref{eq:continuity}) in the last step. 
Next using Theorem \ref{MainTheorem} and results \eqref{apriori_d} in its proof we have 
\begin{align}
\| u - \mcR U \|^2 \leq  C h^2
    (\left| u \right|_{3} + h \left| u \right|_{4}) ( \left| \phi \right|_{3} + h \left| \phi \right|_{4} )
\end{align}
which together with the stability estimate (\ref{eq:dual_stab}) 
concludes the proof.
\end{proof}

%%%%%%%%%%%%%%

In Theorem \ref{MainTheorem} and Theorem \ref{theorem:2} we have given a priori error estimates for the
reconstructed solution $\mcR U$ in energy norm and in $L^2$ norm. We now turn to showing an a priori error estimate for
the continuous piecewise linear solution $U$ in $L^2$ norm.
\begin{theorem}\label{theorem:3} If the stability estimate 
(\ref{eq:dual_stab}) holds, then $U$ satisfies
\begin{align}
\| u - U \| \leq C
    h^2 \left(\left| u \right|_2 + \left| u \right|_3 + h \left| u \right|_4 \right)
\end{align}for sufficiently regular $u$. The constant $C$ is 
independent of $h$ but may in general depend on $\beta$.
\end{theorem}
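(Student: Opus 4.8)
The plan is to reduce the estimate for the piecewise linear solution $U$ to the bound for its reconstruction $\mcR U$ already established in Theorem~\ref{theorem:2}, paying only for the discrepancy between $U$ and $\mcR U$. By the triangle inequality, $\| u - U \| \leq \| u - \mcR U \| + \| \mcR U - U \|$, and the first term is controlled by $C h^2(|u|_3 + h|u|_4)$ directly from Theorem~\ref{theorem:2}. It therefore remains to estimate $\|\mcR U - U\|$, and I expect this to be the crux of the argument.

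The key observation is that, by the nodal consistency condition \eqref{eq:strongRcont}, $\mcR U$ and $U$ coincide at every node, so on each element $K$ the difference $w := (\mcR U - U)|_K$ is a quadratic polynomial that vanishes at the three vertices of $K$ (recall $U|_K$ is affine). On a reference triangle the space of quadratics vanishing at the vertices is finite dimensional, and the $H^2$ seminorm is a norm on it: a function with vanishing second derivatives is affine, and an affine function vanishing at three non-collinear points is identically zero. Combined with the affine scaling estimates \eqref{IIIa}--\eqref{IIIb} already used in the proof of Theorem~\ref{MainTheorem} and the quasi-uniformity \eqref{eq:quasiu}, this finite-dimensional norm equivalence yields the elementwise bound $\|w\|_K \leq C h^2 |w|_{2,K} = C h^2 |\mcR U|_{2,K}$, where the final equality again uses that $U$ is affine on $K$.

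Summing over elements gives $\|\mcR U - U\|^2 \leq C h^4 \sum_{K\in\mcK} |\mcR U|_{2,K}^2$, so it remains to bound the broken $H^2$ seminorm of $\mcR U$. Here I would add and subtract $u$ and use that the curvature energy dominates the $H^2$ seminorm (since $\mu > 0$ we have $|v|_{2,K}^2 \leq C(\sigma_{ij}(v),\kappa_{ij}(v))_K \leq C\tn v\tn_K^2$), so that $\sum_K |\mcR U|_{2,K}^2 \leq C\big(\sum_K |u - \mcR U|_{2,K}^2 + |u|_2^2\big) \leq C\big(\tn u - \mcR U\tn^2 + |u|_2^2\big)$. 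The first term is of higher order by Theorem~\ref{MainTheorem}, namely $\leq C h^2(|u|_3^2 + h^2|u|_4^2)$, while the second is simply $|u|_2^2$; hence $\sum_K |\mcR U|_{2,K}^2 \leq C\big(|u|_2^2 + h^2|u|_3^2 + h^4|u|_4^2\big)$.

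Combining these pieces gives $\|\mcR U - U\| \leq C h^2\big(|u|_2 + h|u|_3 + h^2|u|_4\big) \leq C h^2\big(|u|_2 + |u|_3 + h|u|_4\big)$, and adding the Theorem~\ref{theorem:2} bound for $\|u - \mcR U\|$ yields the claim. The main obstacle is the elementwise inequality $\|w\|_K \leq C h^2 |w|_{2,K}$ for quadratics vanishing at the vertices; once the nodal consistency \eqref{eq:strongRcont} is invoked to make $w$ vanish at all vertices, everything else is bookkeeping with the triangle inequality, the already-established scaling estimates, and Theorems~\ref{MainTheorem} and \ref{theorem:2}.
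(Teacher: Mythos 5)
Your proposal is correct and follows essentially the same route as the paper: triangle inequality, Theorem~\ref{theorem:2} for $\|u-\mcR U\|$, then a bound $\|\mcR U - U\|\leq C h^2|\mcR U|_2$ (broken seminorm) exploiting that $\mcR U$ and $U$ coincide at the nodes, followed by the triangle inequality on the seminorm, the fact that $\mu>0$ lets the energy norm dominate $\sum_K|\cdot|_{2,K}^2$, and Theorem~\ref{MainTheorem}. The only cosmetic difference is that the paper writes $U=\pi\mcR U$ and cites the standard interpolation estimate, whereas you re-derive that estimate elementwise via scaling and finite-dimensional norm equivalence for quadratics vanishing at the vertices.
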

\begin{proof}
Using triangle inequality we have
\begin{align}
\| u - U \| \leq \| u - \mcR U \| + \| \mcR U - U \|
\end{align}
where the first term is evaluated by Theorem \ref{theorem:2}. For the second term we use a standard interpolation estimate
\begin{align}
\| \mcR U - U \| = \| \mcR U - \pi \mcR U \| \leq C h^2 |\mcR U|_2 \leq C h^2 \left( |u - \mcR U|_2 + |u|_2 \right)
\end{align}
where we in the last inequality use the triangle inequality on the seminorm.

As the Lamé parameter $\mu > 0$ there exists a constant $C$ such that
\begin{align}
|u - \mcR U|_2^2 = \sum_{K \in \mcK} |u - \mcR U|_{2,K}^2 \leq C \tn u - \mcR U \tn^2
\end{align}
which is limited by Theorem \ref{MainTheorem}. This gives the error estimate
\begin{align}
\| u - U \| \leq C h^2 \left( |u|_2 + |u|_3 + h |u|_4 \right)
\end{align}
which concludes the proof.
\end{proof}

%%%%%%%%%%%%%%%%%%%%%%%%%%%%%%%%%%%%%%%%%%%%%%%%%%%%%%%%%%%%%%%%%%%%%%%%%%%%%%%%%%%%%%%%%%%%%%%%%%%%%%%%%%%%%%

\section{Numerical results}
Numerical results will be presented for the following proposed methods: Morley reconstruction, fully quadratic reconstruction, and least squared fully quadratic reconstruction. Also, for comparison we will present results for: the Basic Plate Triangle, the nonconforming Morley triangle, a quadratic continuous/discontinuous Galerkin method featuring $C^0$ continuity, and a quadratic discontinuous Galerkin method continuous at the mesh nodes.

Note that for the reconstruction methods, the pointwise error is defined as $e=u-\mcR U$ unless otherwise stated. For other methods the pointwise error is as usual defined as $e=u-U$.

\subsection{Model Problems}
To study the convergence properties of the proposed methods we use two model problems where analytical solutions are known. 

\subsubsection{Problem 1: Simply Supported Plate under Sinusoidal Load}
Consider a simply supported unit square plate, $\Omega=[0,1]^2$, with $D=1$ and $\nu=0$. Find the deflection $u$ given the sinusoidal load
\begin{align}
f =	25 \pi^4 \sin(\pi x) sin(2 \pi y)
\end{align}
This problem has the analytical solution $u = \sin(\pi x) \sin(2 \pi y)$.

\subsubsection{Problem 2: Mixed Boundary Conditions with Uniform Load}
Consider a unit square plate, $\Omega=[0,1]^2$, with two opposite sides simply supported, one side clamped, and the last side free. Given $E=10^6$, $t=0.01$, $\nu = 0.3$ and a uniform load $f=1$, find the deflection $u$ of the plate. An analytical solution in the form of a series expansion is given in Example 46 in~\cite{Timoshenko}.

\subsection{Mesh}
The triangulations we consider include both structured and unstructured meshes. The structured meshes conform to the criteria discussed in Section~\ref{sect:StructMesh}. Example triangulations of the unit square for both structured and unstructured meshes are illustrated in Figure~\ref{fig:meshes}.

\begin{figure}
\centering
 \subfigure[Structured mesh.]{
   \includegraphics[width=0.45\linewidth]{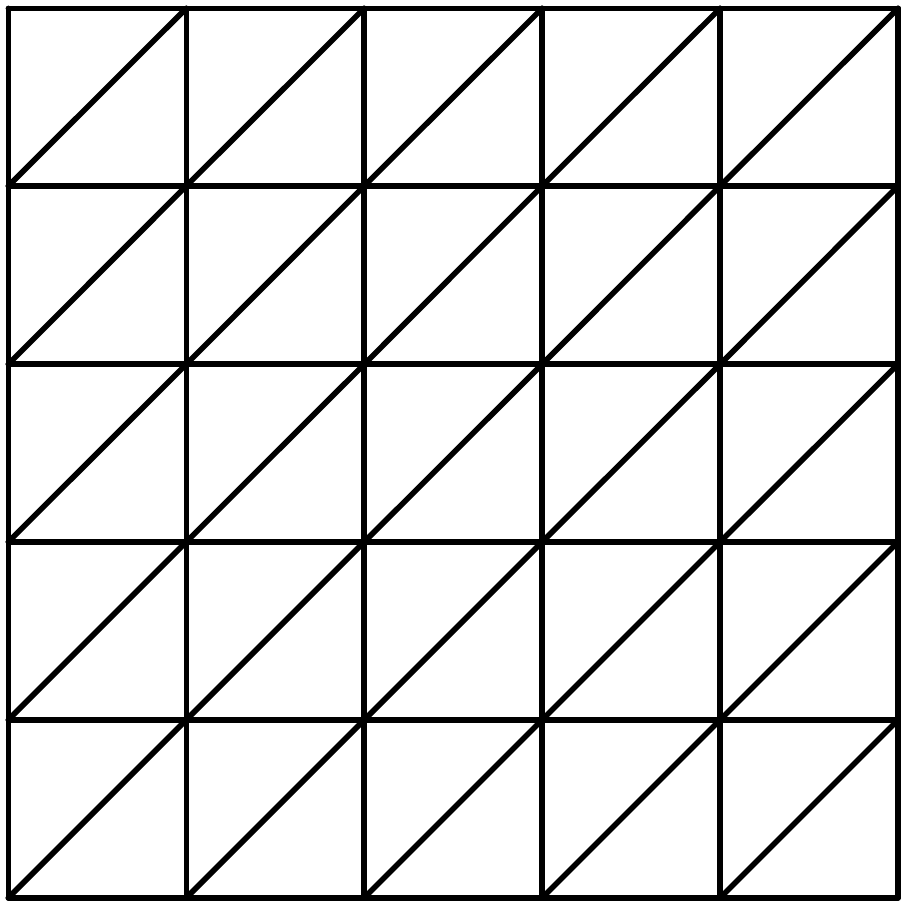}
	\label{fig:structuredmesh}
 }
 \subfigure[Unstructured mesh.]{
   \includegraphics[width=0.45\linewidth]{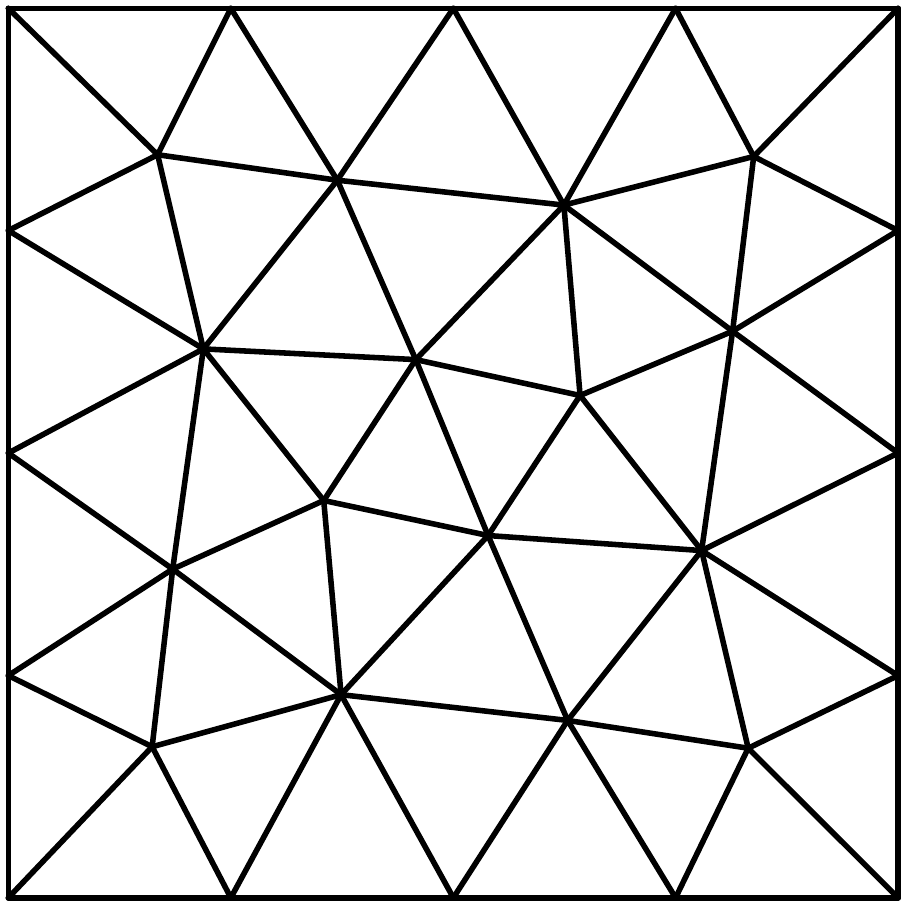}
	\label{fig:unstructuredmesh}
 }
\caption{Two example triangulations of the unit square with comparable mesh size $h$. The left triangulation (a) is a structured mesh and the right triangulation (b) is an unstructured mesh.}
\label{fig:meshes}
\end{figure}

\subsection{Numerical Examples}
To illustrate interesting features of the proposed methods we here give a few numerical solutions.

\subsubsection{Nodal Continuity and Continuity of Normal Gradient}
A reconstructed solution to Problem 1 on a coarse mesh is presented in Figure~\ref{fig:reconstruction}. Note that continuity of the nodes is strongly enforced and the continuity of the normal gradients on edge midpoints is weakly enforced through the dG method's \eqref{eq:BilinearQuad} inherent penalization of jumps in the normal gradient.

\begin{figure}
\centering
\includegraphics[width=12cm]{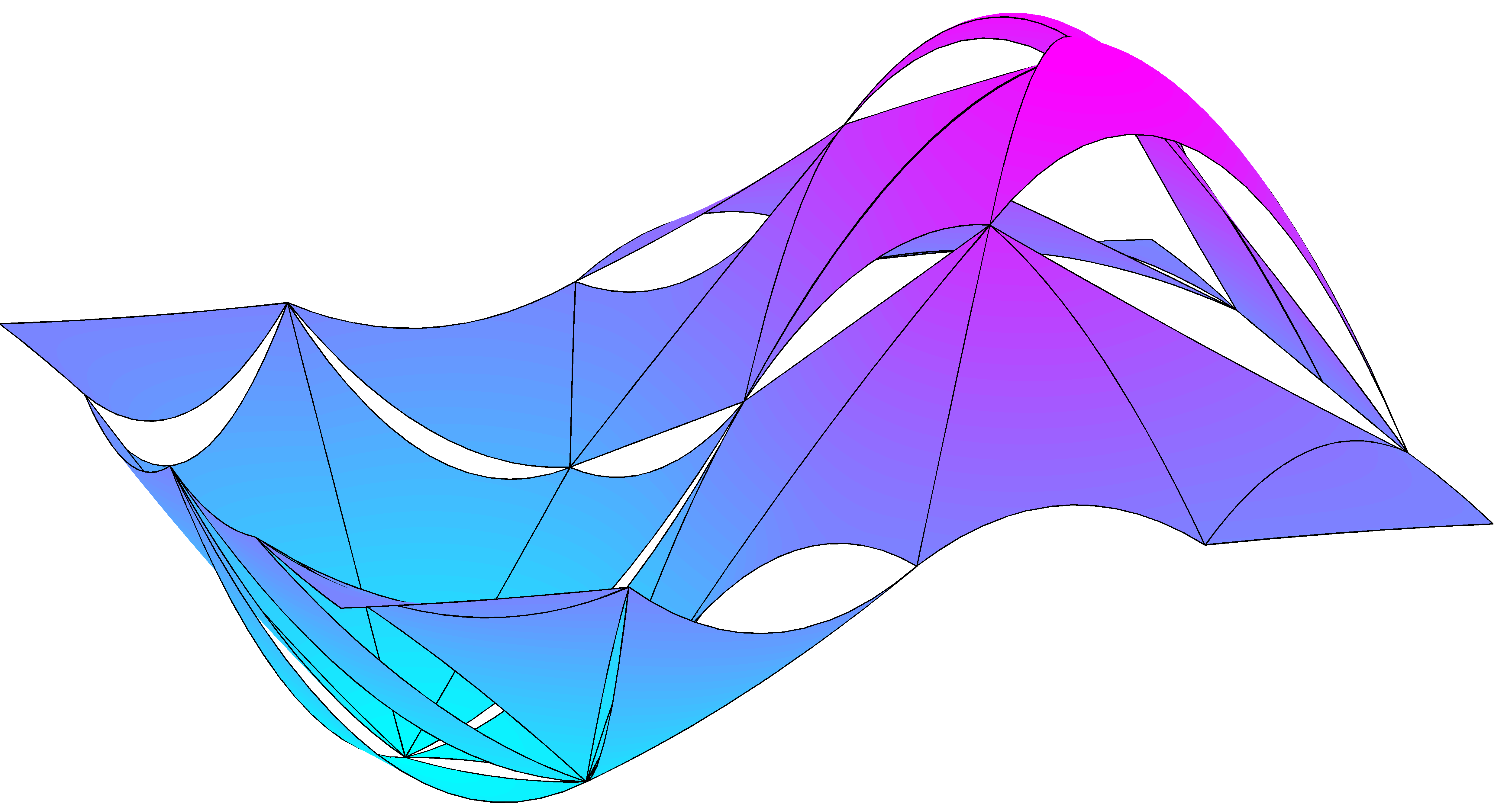}
\caption{Example dG solution to Problem 1 using fully quadratic reconstruction on a coarse unstructured mesh.}
\label{fig:reconstruction}
\end{figure}

\subsubsection{Solution on Mesh including Degenerate Patch}
To illustrate the need of the least squared fully quadratic reconstruction we use a mesh which include a degenerate patch, see Figure~\ref{fig:degenmesh}. This mesh was modified to include this patch as it is unlikely that degenerate patches appear when using quality mesh generation. The collapsed solution is shown in Figure~\ref{fig:degencollapsed}. By extending the patch as in Figure~\ref{fig:degenmeshext} the least squares fully quadratic reconstruction gives an accurate solution, see Figure~\ref{fig:degenfixed}.

\begin{figure}
\centering
 \subfigure[Degenerate four triangle patch.]{
\includegraphics[width=0.45\linewidth]{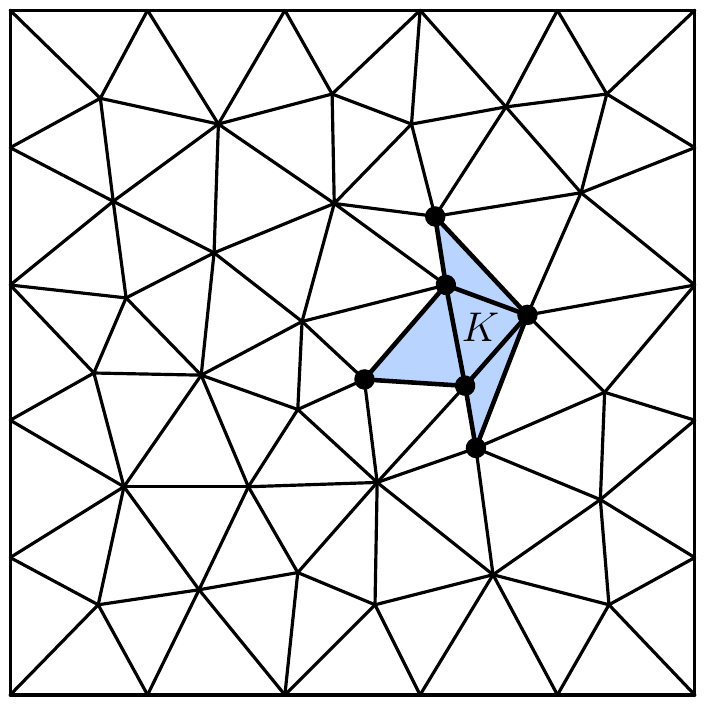}
   %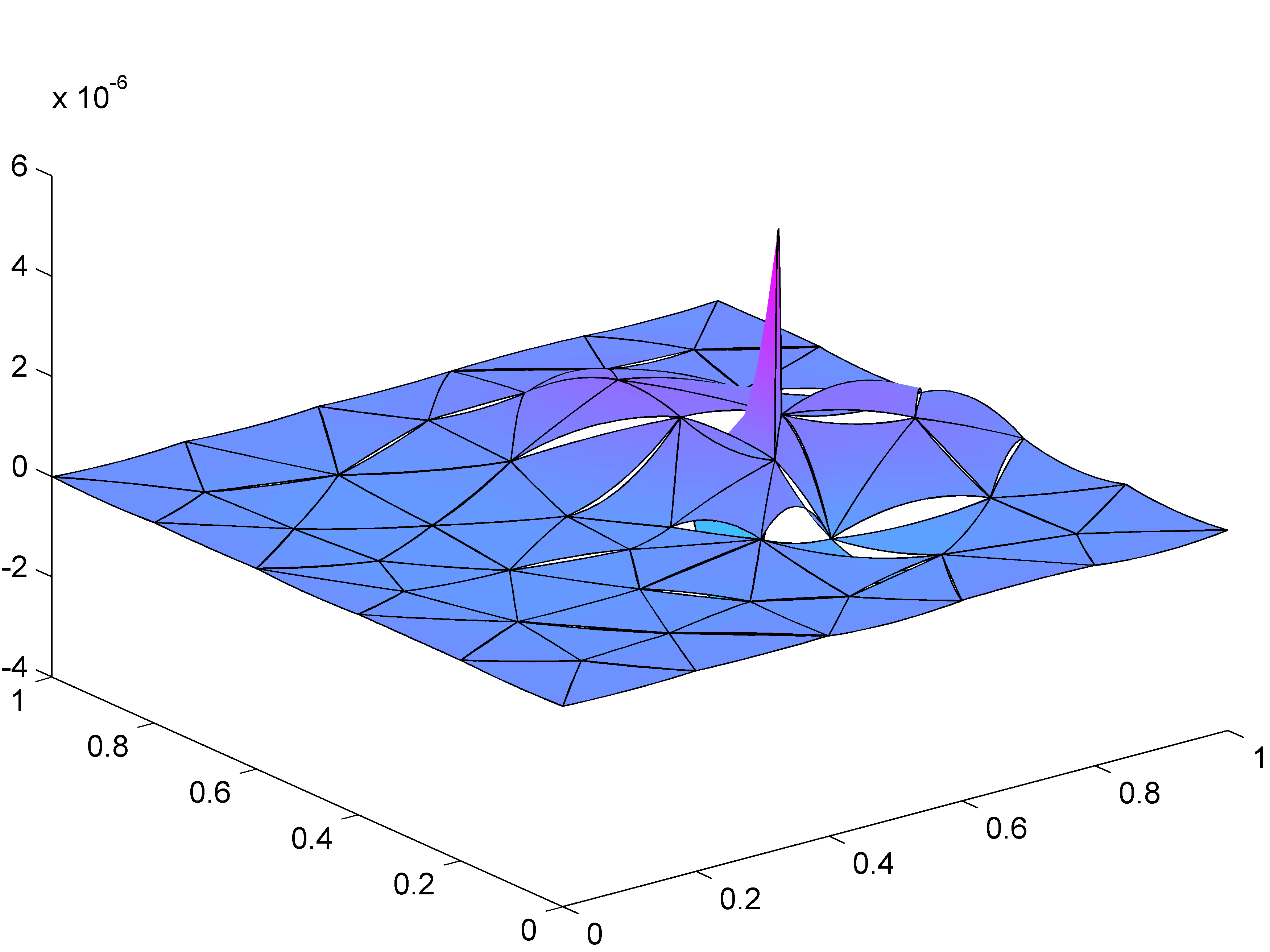
	\label{fig:degenmesh}
 }
 \subfigure[Extended patch.]{
\includegraphics[width=0.45\linewidth]{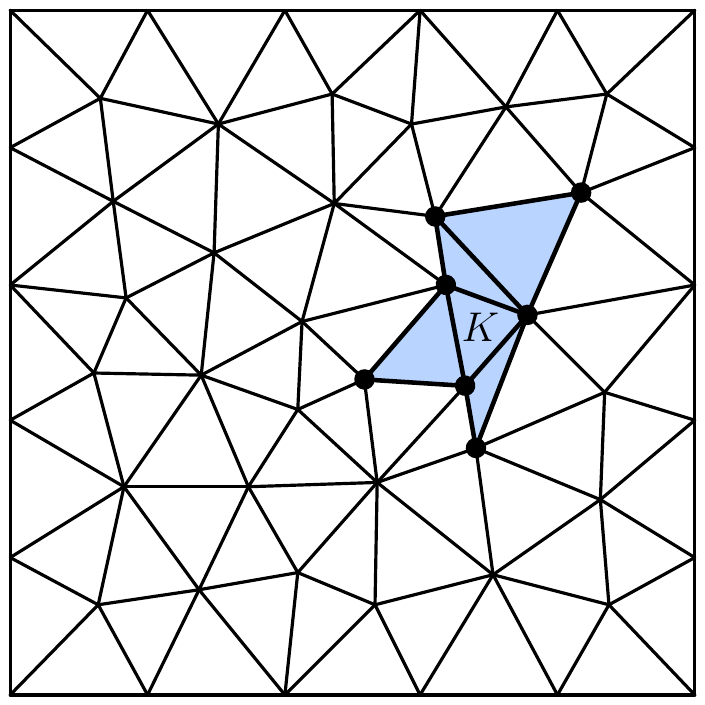}
   %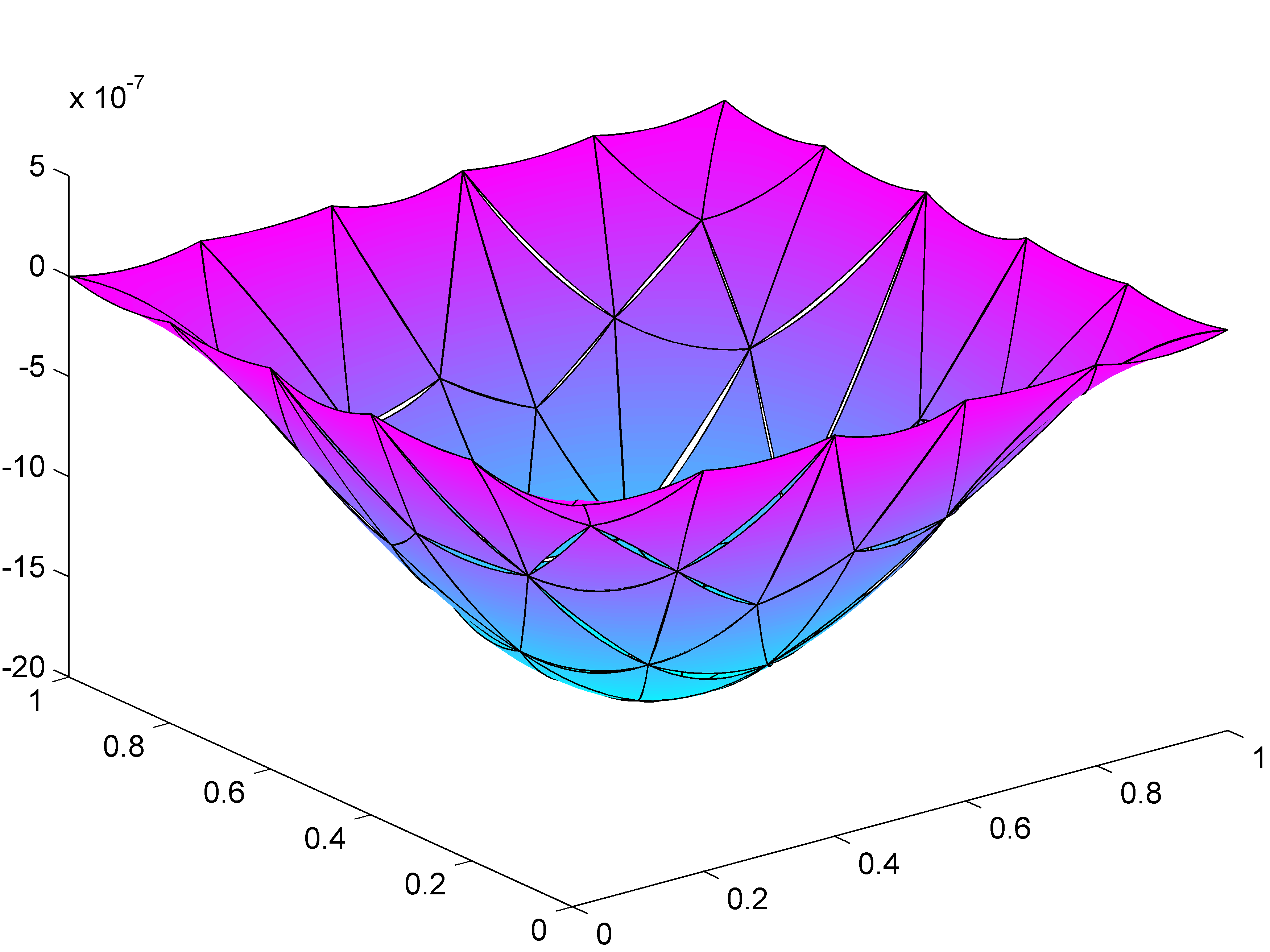
	\label{fig:degenmeshext}
 }
\caption{Example mesh that includes a degenerate patch indicated in (a) and an extension of that patch indicated in (b).}
\label{fig:degeneratemesh}
\end{figure}

\begin{figure}
\centering
 \subfigure[Collapsed solution due to degenerate patch.]{
	\includegraphics[width=10cm]{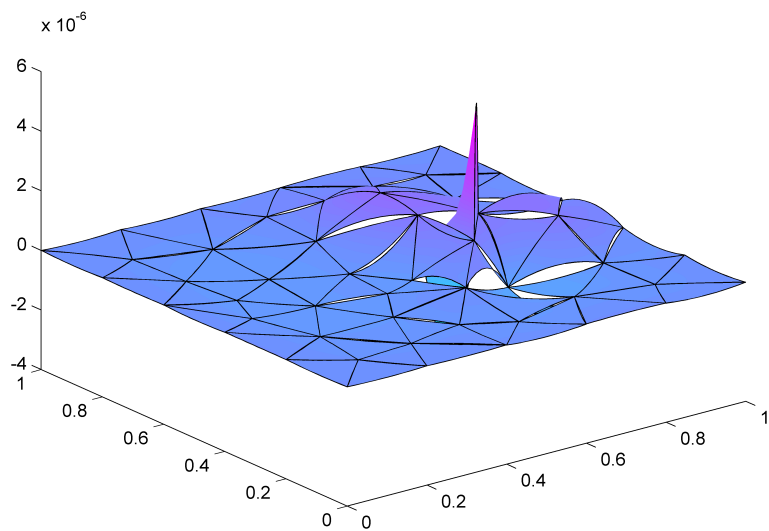}
	\label{fig:degencollapsed}
 }
 \subfigure[Accurate solution using extended patch.]{
	\includegraphics[width=10cm]{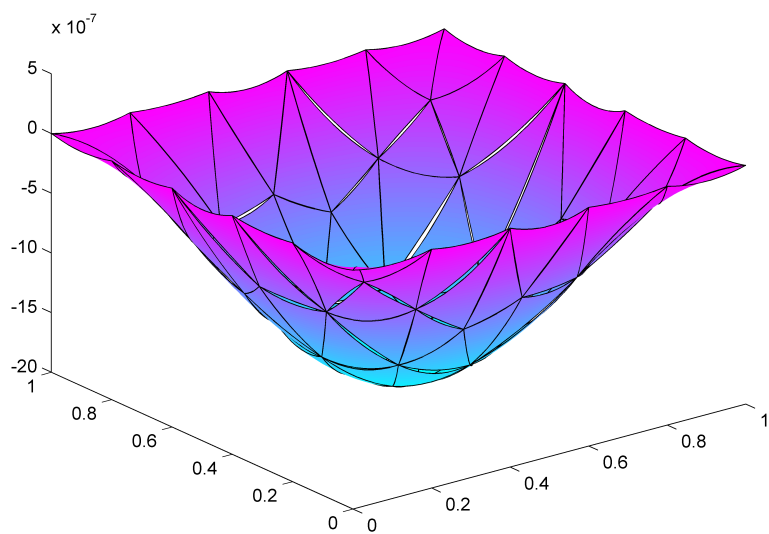}
	\label{fig:degenfixed}
 }
\caption{Numerical solution for a simply supported plate under uniform load using LSFQ-reconstruction including the degenerate patch is shown in (a) and including the extended patch is shown in (b).}
\label{fig:degeneratesolution}
\end{figure}

\subsection{Convergence}

We consider convergence in both the energy norm \eqref{eq:energynorm} and in the $L^2$ norm. As the nonconforming Morley plate can be viewed as a special case of the quadratic discontinuous Galerkin method continuous at the nodes where $\beta\rightarrow\infty$ in \eqref{eq:BilinearQuad}, the energy norm is applicable also to this element.

\subsubsection{Comparison of Morley Reconstruction and Basic Plate Triangle}
As shown in Section~\ref{sect:BPTequiv} the major difference between the Morley reconstruction and the Basic Plate Triangle \cite{OnateCervera1993} lies in the calculation of the load vector. A comparison of the two methods using Problem 1 on a structured mesh is shown in Figure~\ref{fig:BPTvsMorley} and clearly indicate a better convergence rate when using the load calculation of the reconstructed Morley method. The difference in enforcement of clamped boundary conditions does not produce any noticible difference in numerical results. While keeping the difference in convergence rate in mind, we will from here on let the results for the Morley reconstruction method also represent the beviour of the Basic Plate Triangle.

\begin{figure}
\centering
\includegraphics[width=10cm]{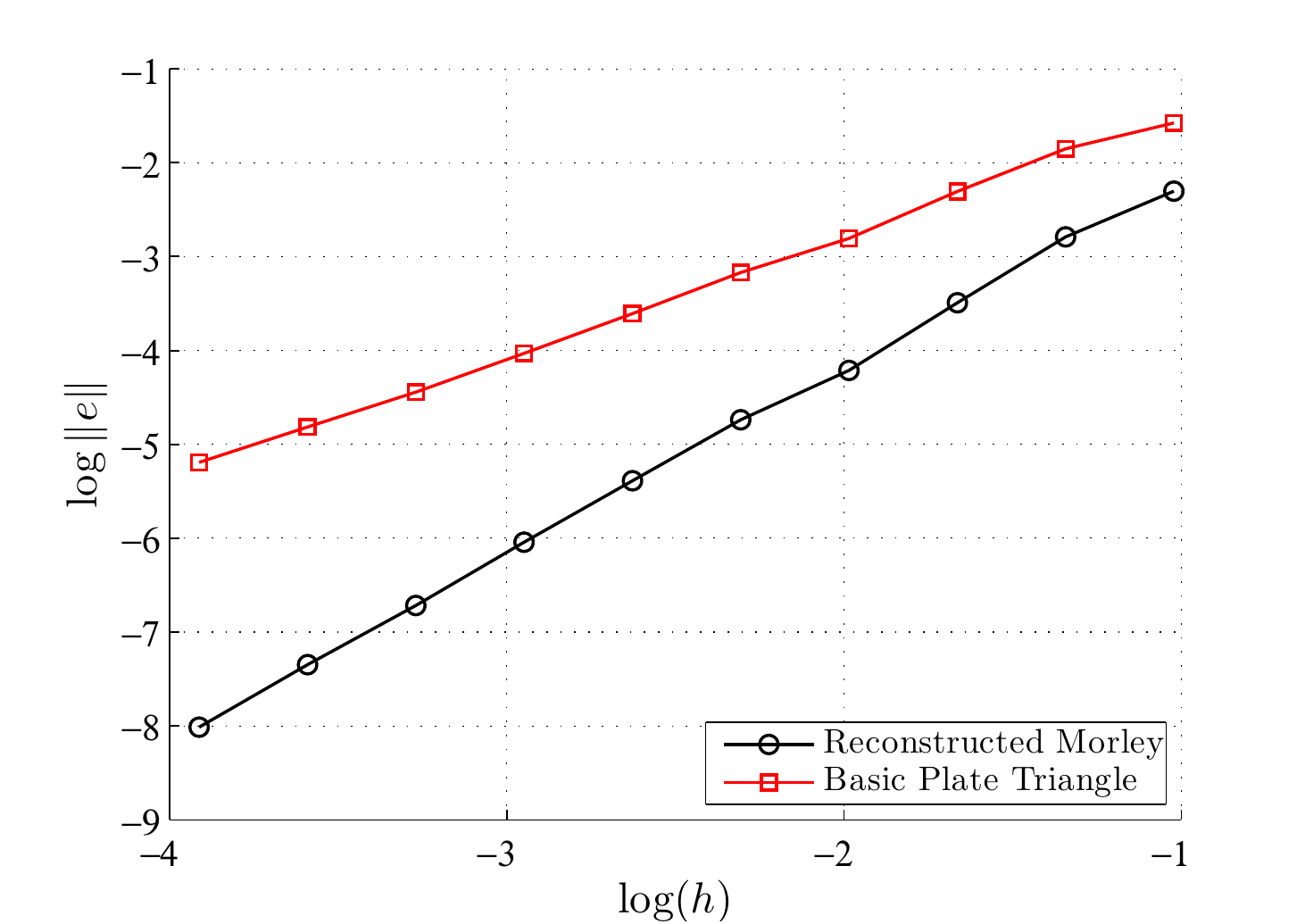}
\caption{The error in the numerical solution of Problem 1 versus the mesh size $h$ on structured meshes. Slopes for the Basic Plate Triangle and the Reconstructed Morley are 1.28 and 1.99 respectively. The error $e=u-U$ is measured in $L^2$ norm.}
\label{fig:BPTvsMorley}
\end{figure}

\subsubsection{Convergence on Structured and Unstructured Meshes}

As noted in Section~\ref{sect:StructMesh} the Morley reconstruction and the fully quadratic reconstruction coincide on structured meshes, and should thereby produce identical results. This is seen in the convergence plots for structured meshes, Figures~\ref{fig:P1_structured_E} and \ref{fig:P1_structured_L2}, where their paths overlap.

On unstructured meshes the Morley reconstruction/Basic Plate Triangle does not converge to the analytical solution. This is seen in Figures~\ref{fig:P1_regular_E}-\ref{fig:p2unstructuredL2}. As noted in Remark~\ref{remark:recon} the Morley reconstruction does not fulfill the assumption of Theorem~\ref{MainTheorem} on unstructured meshes, and thus the a priori error estimates are not valid. On the other hand, the fully quadratic reconstruction does show optimal convergence on unstructured meshes, as predicted by the a priori estimates. In the figures slopes close to $1$ for the error in energy norm and slopes close to $2$ for the error in $L^2$ norm indicate optimal convergence. With the noted exception of the Morley reconstruction/Basic Plate Triangle on unstructured meshes, Figures~\ref{fig:P1_structured_E}-\ref{fig:p2unstructuredL2} indicate optimal convergence for all the compared methods.

We have previously mentioned that the nonconforming Morley triangle can be seen as a special case of the quadratic nodal continuous discontinuous Galerkin method. This is natural as the $\beta$ penalty parameter in the dG method enforces continuity of the normal derivatives over each edge midpoint, which is the very definition of the Morley basis functions. As shown in Figures~\ref{fig:P1_structured_E}-\ref{fig:p2unstructuredL2}, the convergence results for the respective method are close to identical for $\beta=100$ as used in these calculations.

\begin{figure}
\centering
\includegraphics[width=10cm]{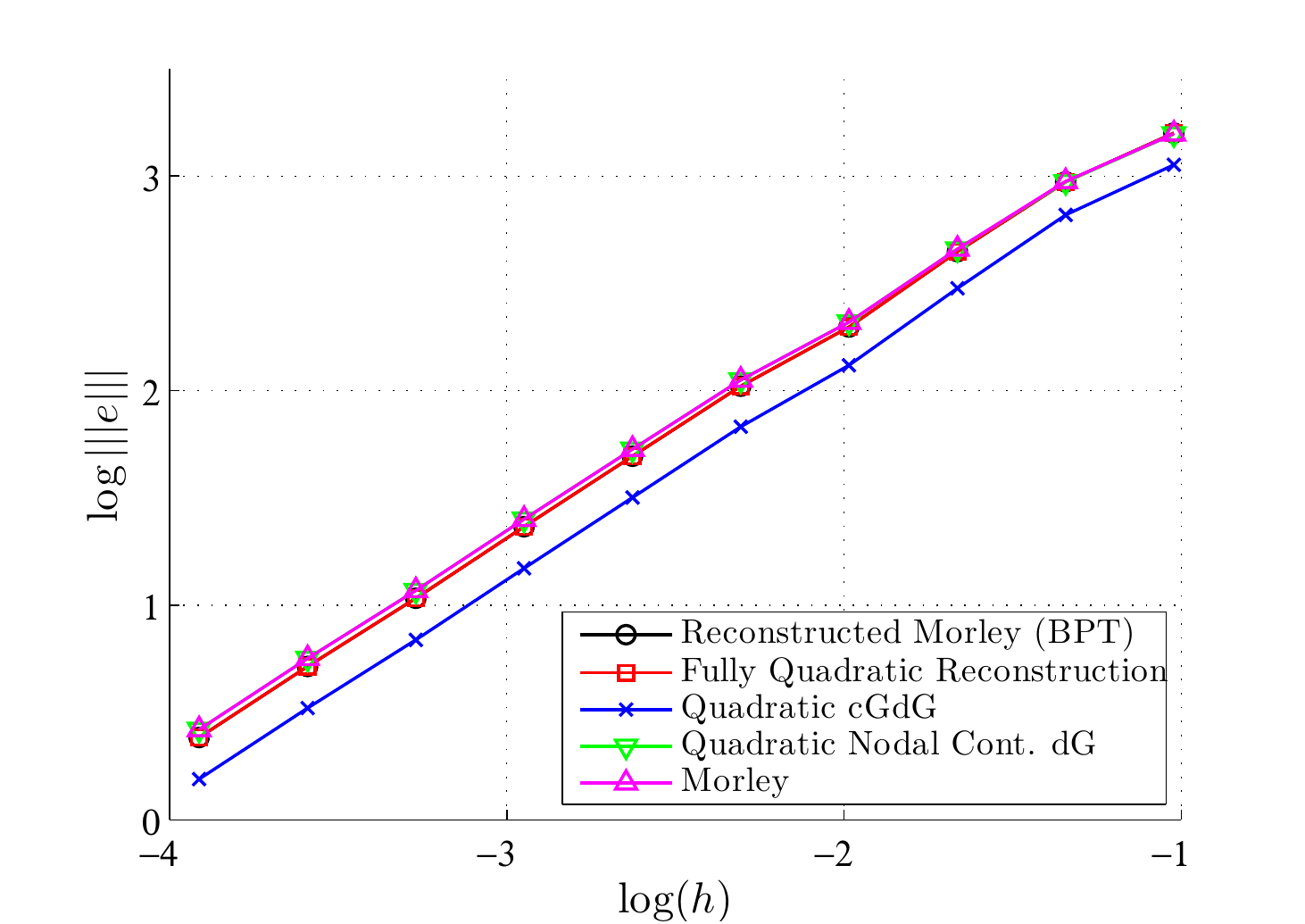}
\caption{The error in the numerical solution of Problem 1 versus the mesh size $h$. Structured meshes are used and the error $e$ is measured in energy norm. Note that the reconstructed Morley and fully quadratic reconstruction produce identical results.}
\label{fig:P1_structured_E}
\end{figure}

\begin{figure}
\centering
\includegraphics[width=10cm]{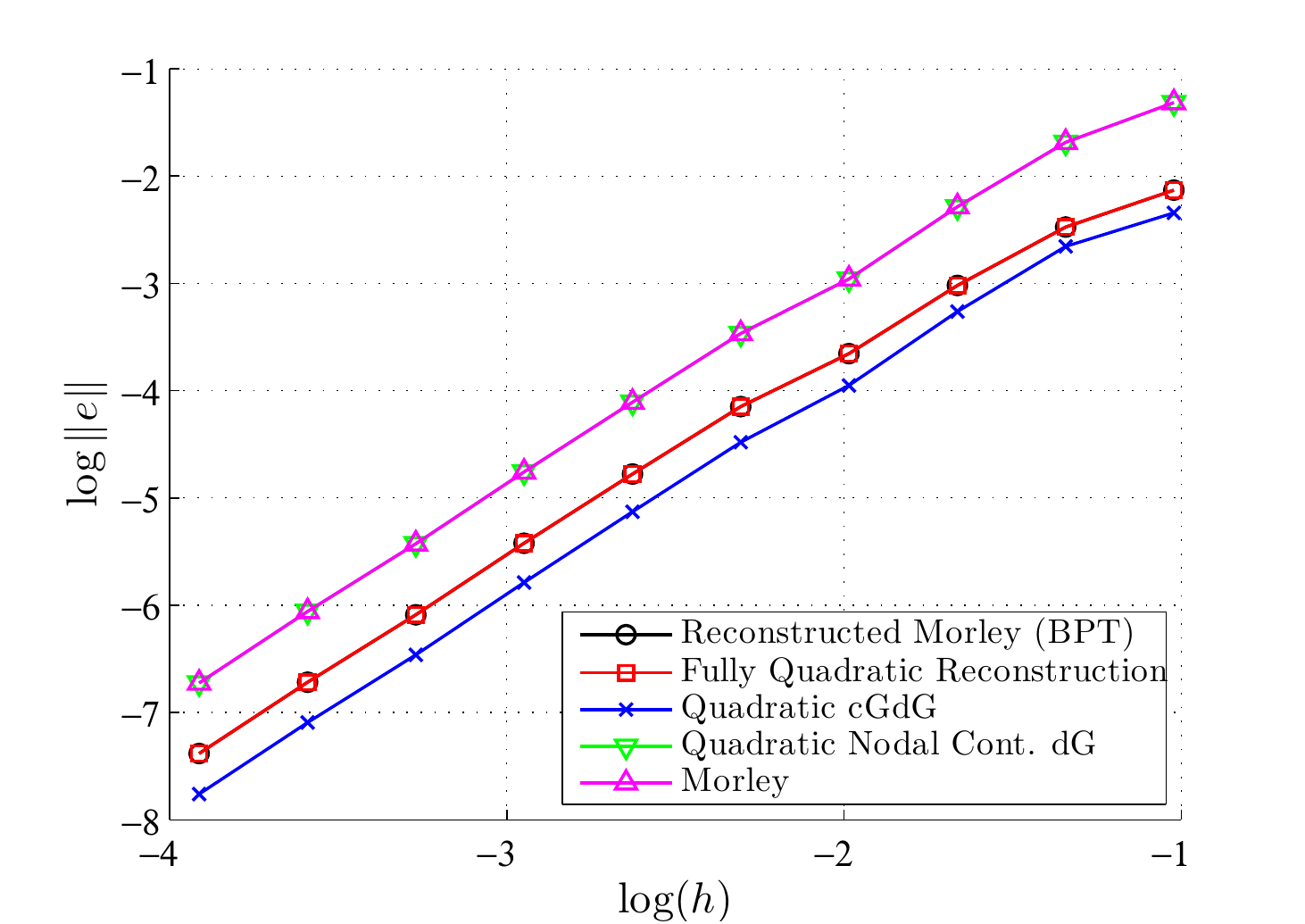}
\caption{The error in the numerical solution of Problem 1 versus the mesh size $h$. Structured meshes are used and the error $e$ is measured in $L^2$ norm. Note that the reconstructed Morley and fully quadratic reconstruction produce identical results.}
\label{fig:P1_structured_L2}
\end{figure}

\begin{figure}
\centering
\includegraphics[width=10cm]{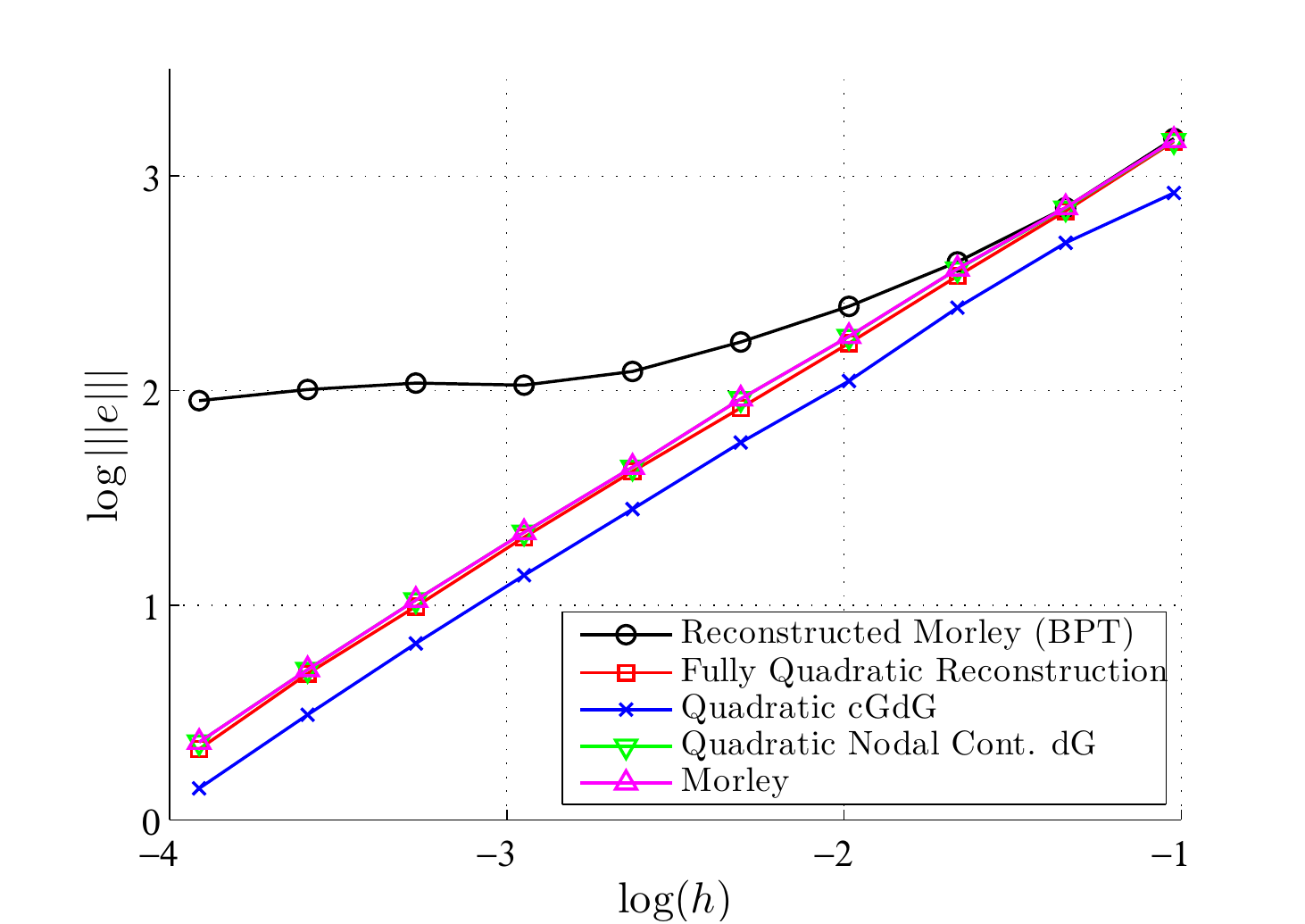}
\caption{The error in the numerical solution of Problem 1 versus the mesh size $h$. Unstructured meshes are used and the error $e$ is measured in energy norm.}
\label{fig:P1_regular_E}
\end{figure}

\begin{figure}
\centering
\includegraphics[width=10cm]{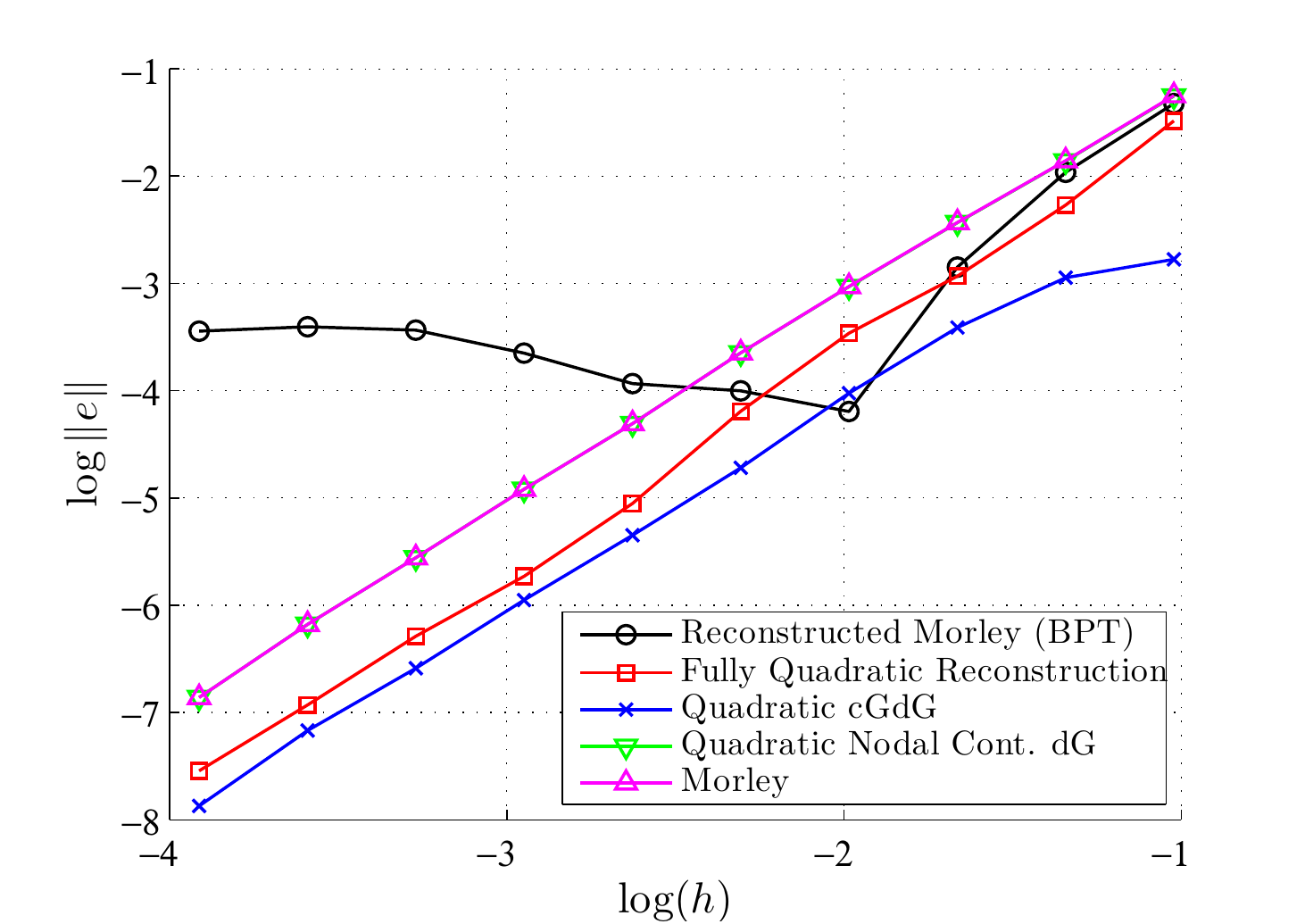}
\caption{The error in the numerical solution of Problem 1 versus the mesh size $h$. Unstructured meshes are used and the error $e$ is measured in $L^2$ norm.}
\label{fig:P1_regular_L2}
\end{figure}

\begin{figure}
\centering
\includegraphics[width=10cm]{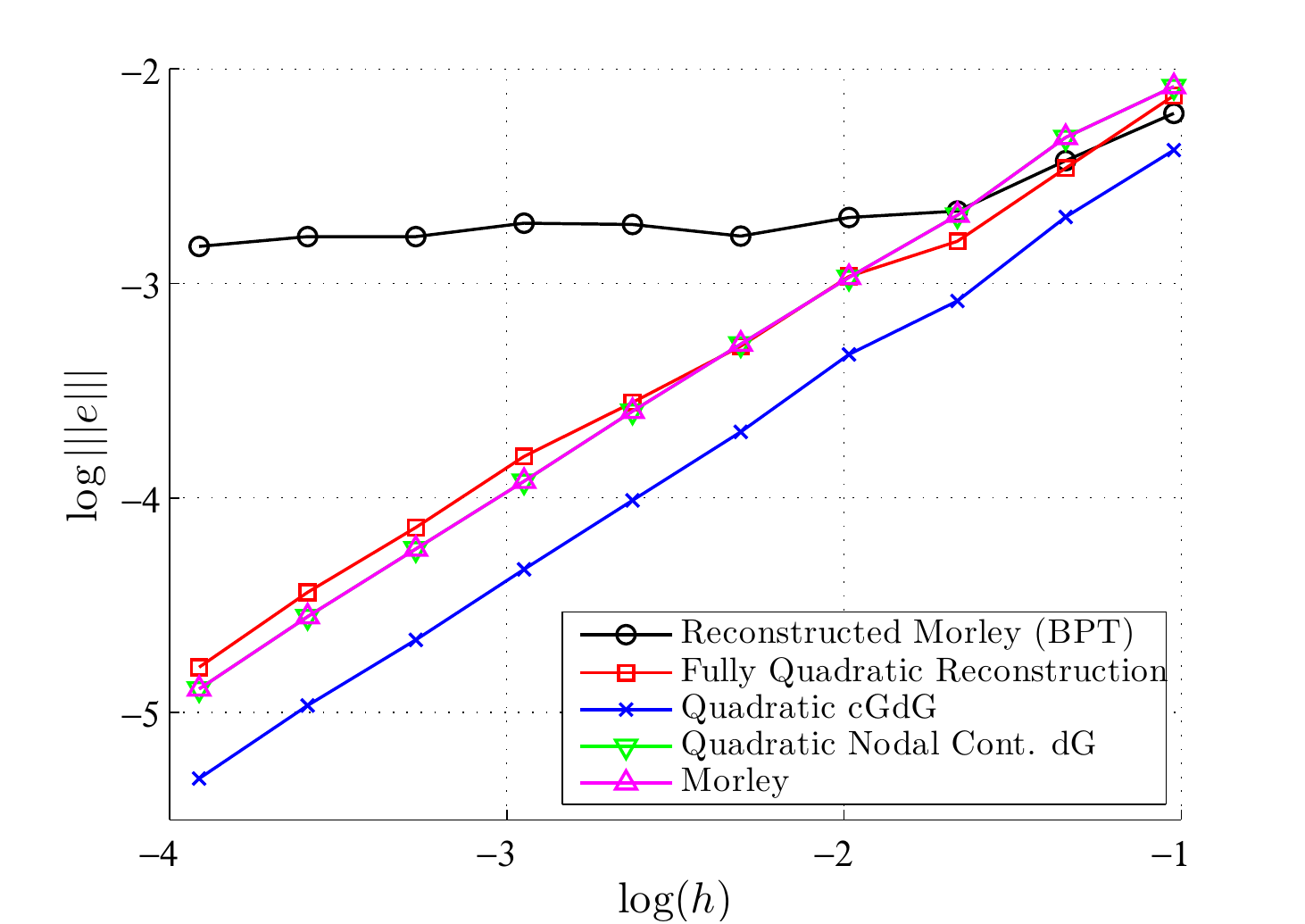}
\caption{The error in the numerical solution of Problem 2 versus the mesh size $h$. Unstructured meshes are used and the error $e$ is measured in energy norm.}
\label{fig:p2unstructuredE}
\end{figure}

\begin{figure}
\centering
\includegraphics[width=10cm]{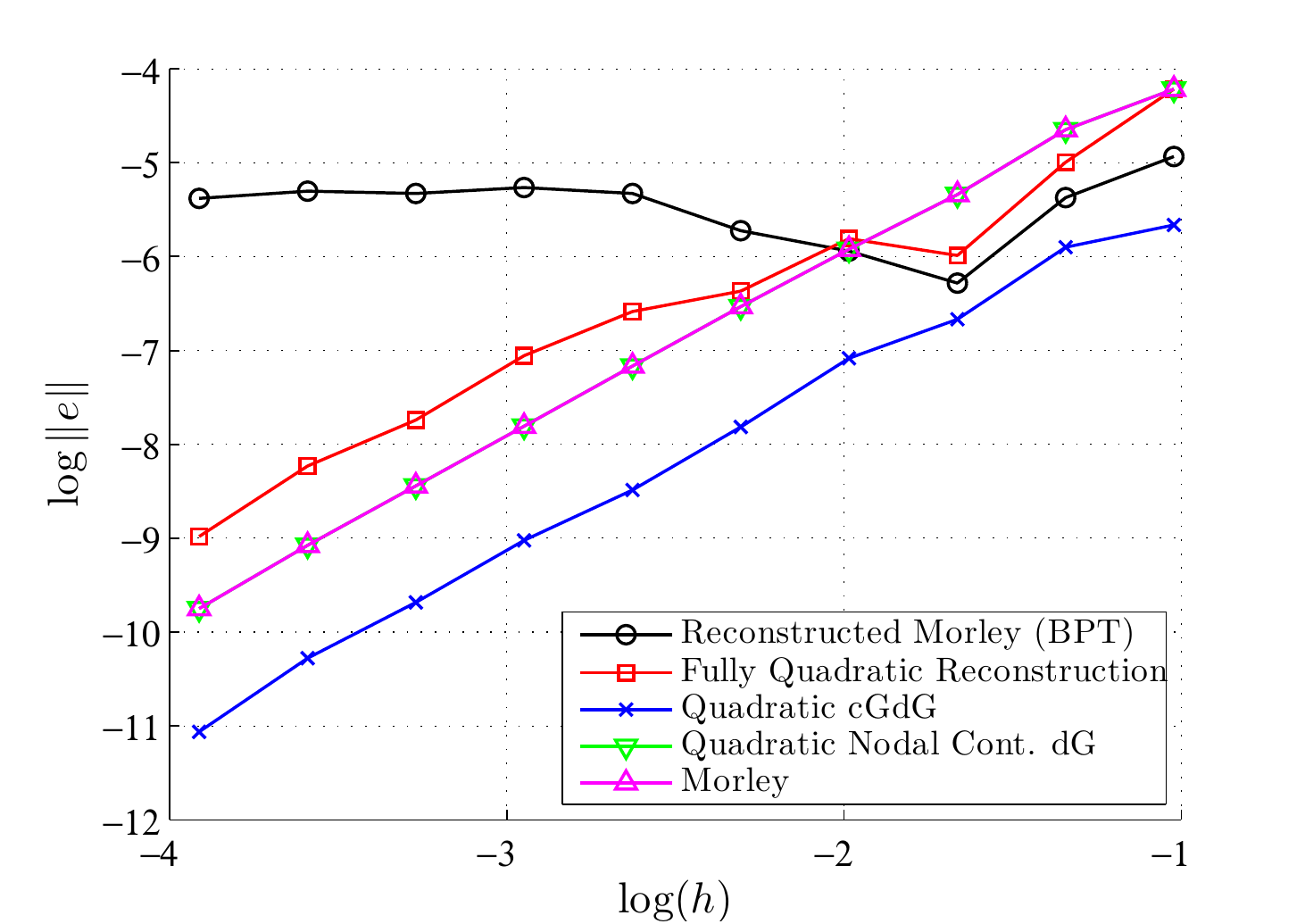}
\caption{The error in the numerical solution of Problem 2 versus the mesh size $h$. Unstructured meshes are used and the error $e$ is measured in $L^2$ norm.}
\label{fig:p2unstructuredL2}
\end{figure}

\subsubsection{Number of Degrees of Freedom}

To give some indication of the performance of these elements in regards to how many degrees of freedom are needed to represent the solution we give Figure~\ref{fig:p2unstructuredE_ndofs}. While it is seen in Figures \ref{fig:P1_structured_E}-\ref{fig:p2unstructuredL2} that the quadratic cG/dG method has the best performance among the tested methods with respect to mesh discretization, Figure~\ref{fig:p2unstructuredE_ndofs} indicates that the fully quadratic reconstruction has the most compact representation performance wise. This is natural as we have smooth solutions. Even though the quadratic nodal continuous dG method produce results close to identical to those of the Morley triangle with regards to mesh discretization it does feature two degrees of freedom on each edge midpoint compared to one for the Morley triangle, explaining that more degrees of freedom are needed for par performance.

\begin{figure}
\centering
\includegraphics[width=10cm]{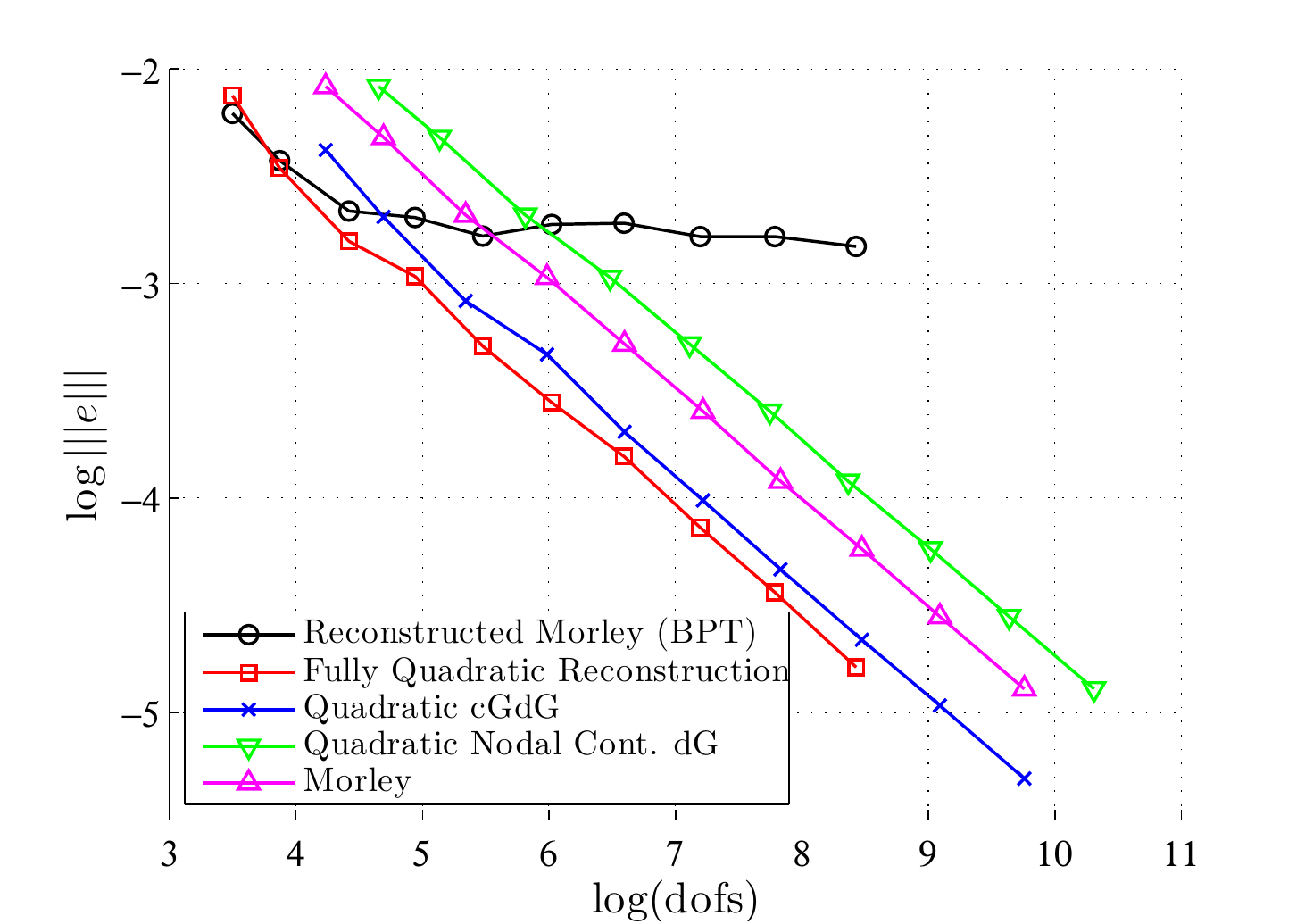}
\caption{The error in the numerical solution of Problem 2 versus the number of degrees of freedom needed. Unstructured meshes are used and the error $e$ is measured in energy norm.}
\label{fig:p2unstructuredE_ndofs}
\end{figure}

\subsubsection{Size of penalty parameter $\beta$}
In Figure~\ref{fig:p1unstructE_beta} we present some numerical results for various $\beta$. As might be suspected the fully quadratic reconstruction exhibits locking effects when $\beta$ is to large. This is natural as neighbouring elements share much of the information through the patch construction. A more surprising result is that the quadratic cG/dG method does not seem to exhibit such locking effects for large $\beta$. This indicates that the finite element space of continuous piecewise quadratic polynomials with continuous normal gradients on edge midpoints is large enough to accurately approximate the solution.
If we on the other hand change the projection operator in the penalty term from the projection onto constants $P_0$ to the projection onto linear functions $P_1$ the cG/dG method exhibits locking effects for large $\beta$.

A mesh independent lower bound for $\beta$ can be calculated if a suitable choice of $h$ in \eqref{eq:BilinearQuad} on each edge is made, see \cite{cGdGHansboLarson}. However, for the numerical results in this paper we have used a global mesh size parameter for $h$. As the meshes used in the numerical results in this paper are quasi-uniform this should be sufficient.

\begin{figure}
\centering
\includegraphics[width=10cm]{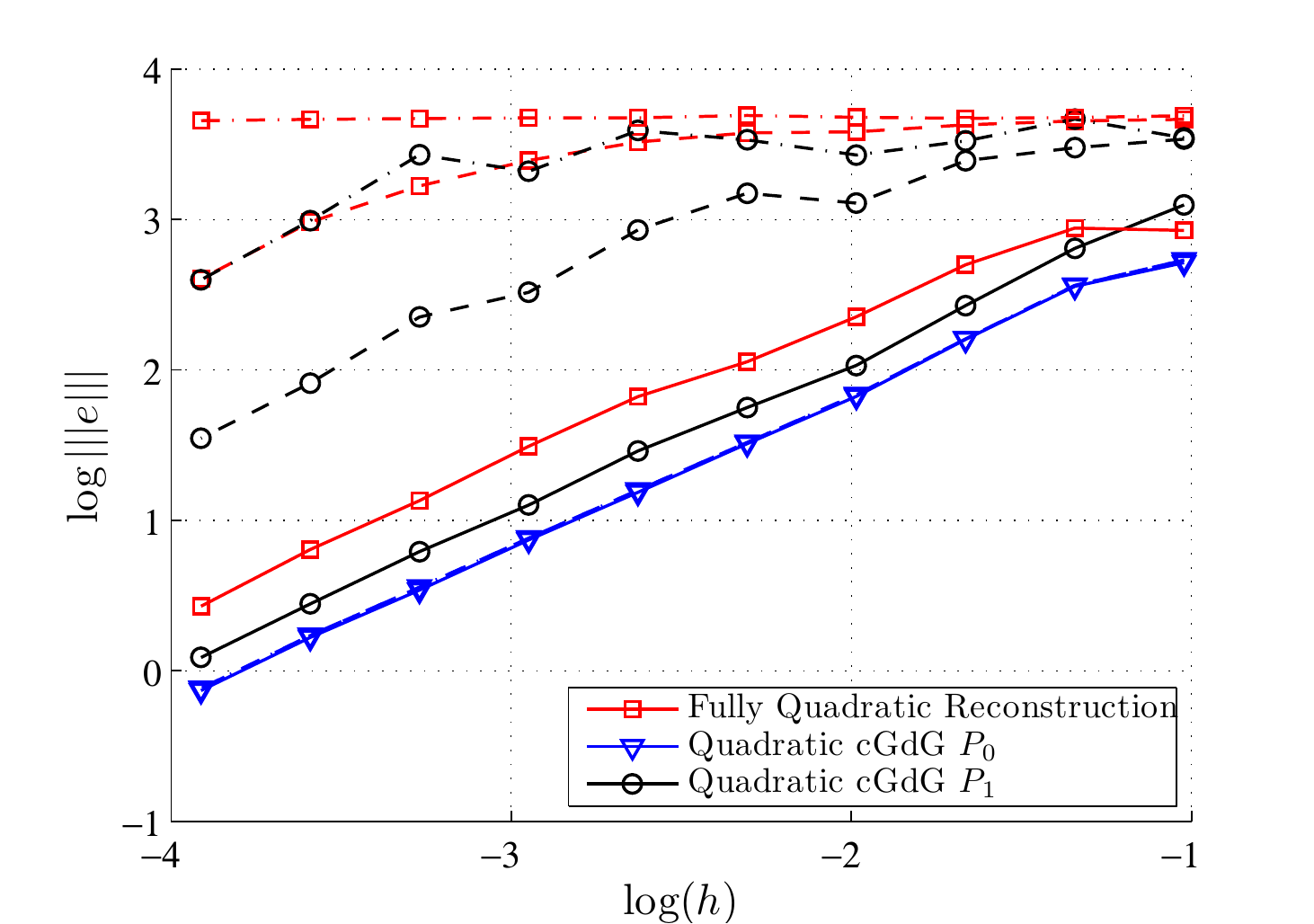}
\caption{The error in the numerical solution of Problem 1 versus the mesh size using various $\beta$. Solid lines indicate $\beta=10^2$, dashed lines indicate $\beta=10^4$, and dash-dot lines indicate $\beta=10^6$. Unstructured meshes are used and the error $e$ is measured in energy norm.}
\label{fig:p1unstructE_beta}
\end{figure}

%-----------------------------------------------------------------------------
\bibliography{PlateCP1Ref}
\bibliographystyle{spmpsci}
%-----------------------------------------------------------------------------
\end{document}